\documentclass[hidelinks,onefignum,onetabnum]{siamart251104}
\usepackage{amsmath}
\DeclareMathOperator*{\argmin}{argmin}
\usepackage{graphicx}
\usepackage{subfig}
\usepackage{amssymb}
\usepackage{algorithm,algpseudocode,amsopn}

\algrenewcommand\algorithmicrequire{\textbf{Input:}}
\algrenewcommand\algorithmicensure{\textbf{Output:}}

\newtheorem{prop}{Proposition}
\newtheorem{remark}{Remark}
\newcommand{\ind}{\,\mbox{d}}

\headers{Low-rank-assisted inverse scattering}{S. Meng}

\title{Low-rank-assisted  inverse medium scattering: Lipschiz stability and ensemble Kalman filter 
}

\author{Shixu Meng\thanks{Department of Mathematical Sciences, The University of Texas at Dallas, 75025 Richardson,  USA. 
  (\email{smeng@utdallas.edu}).}}

\usepackage{amsopn}

\begin{document}

\maketitle

\begin{abstract}
In this work we study the theoretical Lipschitz stability and propose a low-rank-assisted numerical method for the inverse medium scattering beyond the Born region. The proposed low-rank structure is based on the disk prolate spheroidal wave functions, which are eigenfunctions of both the Born forward operator and  a Sturm-Liouville differential operator. We obtain Lipschitz stability for unknowns in a low-rank space in the fully nonlinear case and characterize the explicit Lipschitz constant in the linearized region. We further propose an ensemble Kalman filter to iteratively update the unknown in the proposed low-rank space whose dimension is intrinsically determined by the wave number. Moreover the ensembles are sampled  according to a novel trace class covariance operator motivated by the connection between the proposed low-rank space and the Sturm-Liouville differential operator. Finally numerical examples are provided to illustrate the feasibility of the proposed method.

\end{abstract}

\begin{keywords}
Inverse scattering, Lipschitz stability, low-rank structure, ensemble Kalman filter, generalized prolate spheroidal wave function
\end{keywords}

\begin{MSCcodes}
78A46,65N21,35R30
\end{MSCcodes}

\section{Introduction}
The inverse scattering problem plays an important role in a wide range of physical and engineering applications, such as ocean acoustics, seismic imaging, medical diagnosis, and non-destructing evaluation. The goal of inverse scattering is to image the hidden or internal features using wave measurements. It is a challenging problem because inverse scattering is intrinsically nonlinear and ill-posed, and the measurement data are inevitably corrupted by noise. We refer to the monographs   \cite{cakoni2014qualitative} \cite{cakoni2016inverse} \cite{colton2012inverse} \cite{kirsch2008factorization} for an introduction to the field of inverse scattering. Motivated by recent data-driven machine learning methods \cite{chen2026data,desai2025neural,khoo2019switchnet,zhou2025recovery}, Lipschitz stability in finite dimensional space \cite{alberti2022infinite,bourgeois2013remark}, low-rank structures \cite{meng23data,zhou2024exploring}, and data assimilation approaches \cite{furuya2022inverse,iglesias2013ensemble,nakamura2015inverse,parzer2022convergence}, this work proposes a low-rank-assisted approach for the inverse medium scattering problem with far field data. We investigate theoretical Lipschitz stability in a low-rank space and propose a low-rank-assisted ensemble Kalman filter to numerically solve the inverse medium scattering problem.

With infinite dimensional measurement data, it was shown in \cite{bourgeois2013remark} that Lipschitz stability holds for unknowns in a finite dimensional space for general inverse problems.  Recently, Lipscthiz stability was proved for both unknowns and measurement data in a finite dimensional space  \cite{alberti2022infinite} for general inverse problems. In general, the Lipschitz constant blows up as the dimension of the finite dimensional space goes to infinity and it is interesting to explore an appropriate finite dimensional space  for each specific inverse problem. We are thus motivated to investigate a suitable finite dimensional space (i.e. low-rank space) for the inverse medium scattering problem. In this work we propose to use a low-rank space based on the  disk prolate spheroidal wave functions (disk PSWFs); this choice is natural since the disk PSWFs are exactly the eigenfunctions of the Born (or linearized) forward operator. More importantly, the disk PSWFs allow to characterize  the  approximation capability of the low-rank space since they are eigenfunctions of a Sturm-Liouville differential operator at the same time (i.e., dual property). In the proposed low-rank space, we apply the theoretical tools in \cite{alberti2022infinite,bourgeois2013remark} to obtain Lipschitz stability in the fully nonlinear case and characterize the explicit Lipschitz constant in the Born region.  We also point out that exploring low-rank-assisted approaches is also motivated by recent data-driven machine learning methods \cite{chen2026data,desai2025neural,khoo2019switchnet,zhou2025recovery}, as machine learning also exploits relevant low-dimensional features.

Motivated by derivative-free methods such as the inverse Born series \cite{moskow2008convergence} and its application to inverse scattering with two coefficients \cite{cakoni2025recovery}, in this work we investigate another derivative-free method, namely the ensemble Kalman filter, to numerically solve the inverse scattering problem with the help of the above-mentioned low-rank structure. 
There is a vast literature on the ensemble Kalman filter and we limit our survey to the ones close to our study. The idea of the ensemble Kalman filter was proposed in \cite{evensen1994sequential} for geophysical data assimilation.  In a broader context, the  ensemble Kalman filter is related to Bayesian approaches for inverse problems, cf. \cite{kaipio2005statistical}. Much attention has been attracted since the work of the ensemble Kalman filter for linear inverse problems \cite{iglesias2013ensemble}.  For linear problems, the  ensemble Kalman filter can be related to the Tikhonov-Phillips regularization in connection with the three-dimensional and four-dimensional variational data assimilation (i.e., 3D-VAR and 4D-VAR), cf. \cite{nakamura2015inverse} and \cite{parzer2022convergence}. Recently, data assimilation and Kalman filter was applied to the inverse medium scattering problem \cite{furuya2022inverse} based on the explicit use of the Fr\'echet derivative. Different from these studies, in this work we solve the inverse mediums scattering numerically via the ensemble Kalman filter, a derivative-free method which only implicitly uses the idea of linearization but avoids the explicit calculation of the Fr\'echet derivative. Such a formulation of the ensemble Kalman filter is expected to be applicable to a wide range of inverse scattering problems in complex media when evaluating the derivatives becomes infeasible due to problem reformulation or when the underlying partial differential equation does not perfectly represent the real-world system.  Particularly in this work, we first introduce a forward map which maps the real and imaginary parts of the contrast in $L^2(B;\mathbb{R})\oplus L^2(B;\mathbb{R})$ to the real and imaginary parts of the processed far field datum in $L^2(B;\mathbb{R})\oplus L^2(B;\mathbb{R})$, and then heuristically interpret the ensemble Kalman filter for the nonlinear inverse scattering problem using its connection to the classical Tikhonov-Phillips regularization.  It is worth pointing out that the ensembles are only sampled in the proposed low-rank space whose dimension is mathematical determined by the wave number, unlike other heuristic ways of choosing the dimension of low-rank spaces; as such, the ensemble size is only on the order of the intrinsic dimension of the low-rank space. Moreover, another contribution is the introduction of a novel covariance operator using the connection between the proposed low-rank structure and a Sturm-Liouville differential operator.   

The remaining of the paper is organized as follows. In Section \ref{section: model classical}, we introduce the classical mathematical model of the inverse medium scattering problem and a reformulation motivated by the  reciprocity relation. The proposed low-rank space will be based on the disk PSWFs, whose necessary preliminaries are discussed in Section \ref{section: linearized low-rank structure}. We obtain in Section \ref{section: stability} the Lipschitz stability in low-rank spaces and characterize the explicit Lipschitz constant in the Born region.
We discuss the ensemble Kalman filter for the fully nonlinear inverse scattering problem in Section \ref{section: EnKF} and propose a novel low-rank-assisted ensemble Kalman filter. Finally, numerical experiments are provided in Section \ref{section: numerics} to illustrate the feasibility of the proposed method.

\section{Mathematical formulation of the inverse scattering problem} \label{section: model classical}
Given a wave number $k>0$, we first introduce the model of the direct scattering problem due to a plane wave
\begin{equation*} %
e^{i k x \cdot \hat{\theta}}, \quad \hat{\theta} \in \mathbb{S} :=\{ x \in \mathbb{R}^2: |x|=1\},
\end{equation*}
where $\hat{\theta}$ is the direction of propagation.
Let $\Omega \subset \mathbb{R}^2$ be an open and bounded set with
Lipschitz boundary $\partial \Omega$ such that $\mathbb{R}^2 \backslash \overline{\Omega}$  is connected. In this work we assume that $\Omega$ is compactly supported in the unit disk; otherwise a scaling can be applied to reformulate the problem. Given a possibly complex-valued contrast of the medium $q(x) \in L^\infty(\mathbb{R}^2)$ where $\mbox{Re}(1+q) > 0$, $\mbox{Im}(q) \ge 0$ on $\Omega$,  and $q=0$ on $\mathbb{R}^2 \backslash \overline{\Omega}$,  the direct scattering problem  is to find total wave field $u^t(x;\hat{\theta};k) = e^{i k x \cdot \hat{\theta} } + u^s(x;\hat{\theta};k)$ belonging to $H^1_{loc}(\mathbb{R}^2)$  such that
\begin{eqnarray}
\Delta_x u^t(x;\hat{\theta};k) + k^2 \left(1+q(x)\right) u^t(x;\hat{\theta};k) =  0   &\mbox{ in }&  \mathbb{R}^2,   \label{medium us eqn1}\\
\lim_{r:=|x|\to \infty} r^{\frac{1}{2}}  \big( \frac{\partial u^s(x;\hat{\theta};k)}{\partial r} -ik u^s(x;\hat{\theta};k)\big) =0.  \label{medium us eqn2}
\end{eqnarray}
The scattered wave field is $u^s(\cdot;\hat{\theta};k)$. A scattered wave is called radiating   if it satisfies  the last equation, i.e., the Sommerfeld radiation condition, uniformly for all directions. The contrast $q$ is related to physical quantities such as the electric permittivity and magnetic permeability for polarized electromagnetic scattering, cf. \cite{cakoni2014qualitative}.
The above scattering problem \eqref{medium us eqn1}--\eqref{medium us eqn2} can be solved via the more general problem where one looks for a radiating solution $u^s \in H^1_{loc}(\mathbb{R}^2)$   to
\begin{equation} \label{medium us eqn1+2}
\Delta u^s + k^2(1+q) u^s = -k^2 q f,
\end{equation}
where $f \in L^\infty(\mathbb{R}^2)$. Setting $f(x)=e^{ikx\cdot \hat{\theta}}$ in \eqref{medium us eqn1+2} recovers \eqref{medium us eqn1}--\eqref{medium us eqn2}.  

It is known that there exists a unique radiating solution to \eqref{medium us eqn1+2}, cf. \cite{colton2012inverse} and \cite{kirsch2017remarks}. For example, the solution can be obtained  with the help of the Lippmann-Schwinger integral equation,
\begin{eqnarray*}
u^s(x) - k^2 \int_\Omega \Phi(x,y)q(y)  u^s(y)  \ind y= k^2 \int_\Omega \Phi(x,y)q(y)   f(y) \ind y, \quad x \in \mathbb{R}^2,
\end{eqnarray*}
where $\Phi(x,y)$ is  the fundamental function to the Helmholtz equation given by
\begin{equation*} \label{Green function def}
\Phi(x,y)
:=
\frac{i}{4} H^{(1)}_0(k|x-y|)
\qquad x\not=y,
\end{equation*}
here $H^{(1)}_0$ denotes the Hankel function of the first kind of order zero, cf. \cite{colton2012inverse}.
From the asymptotic of the scattered wave field (cf. \cite{cakoni2014qualitative} \cite{cakoni2016inverse})
\begin{equation} \label{full model 1}
u^{s}(x;\hat{\theta};k)
=
\frac{e^{i\frac{\pi}{4}}}{\sqrt{8k\pi}} \frac{e^{ikr}}{\sqrt{r}}\left(u^{\infty}(\hat{x};\hat{\theta};k)+\mathcal{O}\left(\frac{1}{r}\right)\right)
 \quad\mbox{as }\,r=|x|\rightarrow\infty,
\end{equation}
uniformly with respect to all directions $\hat{x}:=x/|x|\in\mathbb{S}$, we arrive at $u^{\infty}(\hat{x};\hat{\theta};k)$ which is known as the  far-field pattern with $\hat{x}\in\mathbb{S}$ denoting the observation direction. The multi-static data at a fixed frequency are given by
\begin{equation} \label{Section model far-field data}
\{u^{\infty}(\hat{x};\hat{\theta};k): \hat{x}\in\mathbb{S}, \hat{\theta}\in\mathbb{S}\}.
\end{equation}
The inverse scattering problem is to determine the contrast $q$ from these far-field data. It is known that this two dimensional inverse scattering problem has a unique solution, cf. \cite{bukhgeim2008recovering}. 

Motivated by recent data-driven machine learning methods for inverse scattering \cite{chen2026data,desai2025neural,khoo2019switchnet,zhou2025recovery},  we are interested in exploring the intrinsic property of the scattering data and suitable low-dimensional features. Reciprocity relation, an intrinsic property of the scattering problem, has been used to prove uniqueness of the inverse scattering problem, cf. \cite{colton2012inverse} ; however it was not given enough explicit attention in the inverse algorithms. In the next section, we   process the far field data to reformulate the inverse scattering problem.

\subsection{Reciprocity-relation-aware formulation} \label{section: model reformulation}
To process the far field data, we first draw insights from the Born or linearized model.
The Born approximation $u_b^s(x;\hat{\theta};k)$ is the unique radiating solution to the Born model
\begin{equation}\label{Born model}
    \Delta u^s_b+k^2u^{s}_b=-k^2q e^{ik x \cdot \hat{\theta}} \quad \mbox{in} \quad \mathbb{R}^2.
\end{equation}
We refer the model \eqref{Born model} as the Born model and the model \eqref{medium us eqn1+2} as the full model. From the asymptotic behavior
\begin{equation*}
    u_b^s(x;\hat{\theta};k)=\frac{e^{i\frac{\pi}{4}}}{\sqrt{8k\pi}}\frac{e^{ikr}}{\sqrt{r}}\left(u^{\infty}_b(\hat{x};\hat{\theta};k)+\mathcal{O}\left(\frac{1}{r}\right) \right) \quad \mbox{as}\quad r\rightarrow\infty,
\end{equation*}
we obtain  the Born far-field pattern $u^{\infty}_b(\hat{x};\hat{\theta};k)$, $\hat{x}=x/|x|\in \mathbb{S}$. One advantage of the Born far-field data is that one can directly obtain an explicit formula by
\begin{align}\label{Born fourier}
    u_b^\infty (\hat{x};\hat{\theta};k)&=k^2\int_\Omega e^{-ik\hat{x}\cdot y}q(y) e^{iky\cdot \hat{\theta}}\ind y.
\end{align}
The value of the Born far-field pattern $u_b^\infty (\hat{x};\hat{\theta};k)$ is only determined by $\hat{\theta} - \hat{x}$, this motivates to introduce   $p= \frac{\hat{\theta} - \hat{x}}{2} \in B$ where $B := B(0,1)$ denotes the unit disk, then the knowledge of the multi-static Born far-field data,  {i.e., $\{u_b^{\infty}(\hat{x};\hat{\theta};k) : \hat{x} \in \mathbb{S}, \hat{\theta} \in \mathbb{S}\}$,}
gives the knowledge of the restricted Fourier transform of the unknown $q$, i.e.,
$
\int_\Omega e^{ic py }q(y)  \ind y
$ for $p \in B$ {with $c=2k$}.

The fact that the Born far-field pattern is only determined by $\hat{\theta} - \hat{x}$ can be carried over to the fully nonlinear case, cf. \cite{cakoni2025recovery, zhou2024exploring}. We now summarize   this fact and give a brief proof. To begin with, we first state the following reciprocity relation, cf. \cite{cakoni2014qualitative}.
\begin{lemma}
Let $u^s(x;\hat{\theta};k)$ be the unique radiating solution to \eqref{medium us eqn1+2} with $f(x)=e^{ikx\cdot \hat{\theta}}$, and let  $u^\infty(\hat{x};\hat{\theta};k)$ be its far-field pattern. Then the following reciprocity relation holds
$$
u^\infty(\hat{x};\hat{\theta};k) = u^\infty(-\hat{\theta};-\hat{x};k), \quad \forall \hat{x},\hat{\theta} \in \mathbb{S}.
$$
\end{lemma}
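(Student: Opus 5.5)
The plan is to follow the classical Green's-identity argument for reciprocity, cf. \cite{colton2012inverse}, adapted to the two-dimensional normalization \eqref{full model 1}. First I would derive an integral representation of the far-field pattern in terms of the total field. Writing \eqref{medium us eqn1+2} with $f=e^{ikx\cdot\hat\theta}$ as the Lippmann--Schwinger equation $u^s(x)=k^2\int_\Omega\Phi(x,y)q(y)u^t(y;\hat\theta)\ind y$, I would use the asymptotics of $\Phi(x,y)$ as $|x|\to\infty$. These give $\Phi(x,y)=\frac{e^{i\pi/4}}{\sqrt{8k\pi}}\frac{e^{ikr}}{\sqrt r}\bigl(e^{-ik\hat x\cdot y}+\mathcal O(1/r)\bigr)$, which leads to
\begin{equation*}
u^\infty(\hat x;\hat\theta;k)=k^2\int_\Omega e^{-ik\hat x\cdot y}q(y)u^t(y;\hat\theta;k)\ind y .
\end{equation*}
This is consistent with the Born formula \eqref{Born fourier}. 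Since $e^{-ik\hat x\cdot y}=e^{iky\cdot(-\hat x)}$ is the incident field in direction $-\hat x$, this reads $u^\infty(\hat x;\hat\theta)=k^2\int_\Omega q\,u^i(\cdot;-\hat x)\,u^t(\cdot;\hat\theta)\ind y$.

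Next, the target identity $u^\infty(-\hat\theta;-\hat x)=k^2\int_\Omega q\,u^i(\cdot;\hat\theta)\,u^t(\cdot;-\hat x)\ind y$ would follow if I show that the bilinear form
\begin{equation*}
\int_\Omega q\,u^i(\cdot;d_1)\,u^t(\cdot;d_2)\ind y
\end{equation*}
is symmetric in $(d_1,d_2)$. To see this, I split $u^t=u^i+u^s$. The term $\int q\,u^i(d_1)u^i(d_2)$ is trivially symmetric, so it remains to show $\int q\,u^i(d_1)u^s(d_2)=\int q\,u^i(d_2)u^s(d_1)$. Equivalently, I would show
\begin{equation*}
\int_\Omega q\,u^t(d_1)u^t(d_2)-\int_\Omega q\,u^s(d_1)u^s(d_2)
\end{equation*}
is symmetric, or more directly I would use the symmetric operator structure. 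Let $T w=k^2\int_\Omega \Phi(\cdot,y)q(y)w(y)\ind y$. Because $\Phi(x,y)=\Phi(y,x)$, $T$ satisfies $\int q\,(Tw)v=\int q\,w\,(Tv)$, i.e.\ it is symmetric with respect to the bilinear pairing $\langle w,v\rangle_q=\int_\Omega q\,w\,v$. Since $u^t=(I-T)^{-1}u^i$ and $(I-T)^{-1}$ inherits this symmetry (via a Neumann series where it converges, or more robustly via transposition: if $(I-T)w=g$ and $(I-T)v=h$ then $\langle w,h\rangle_q=\langle g,v\rangle_q$), I obtain $\langle u^i(d_1),u^t(d_2)\rangle_q=\langle u^t(d_1),u^i(d_2)\rangle_q$. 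Taking $d_1=-\hat x$ and $d_2=\hat\theta$ then gives $u^\infty(\hat x;\hat\theta)=u^\infty(-\hat\theta;-\hat x)$.

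The main obstacle is making the transposition step rigorous, since $(I-T)$ is only known to be invertible via uniqueness of \eqref{medium us eqn1+2}. I would handle it by the direct computation $\langle w,(I-T)v\rangle_q=\langle (I-T)w,v\rangle_q$, which uses only the symmetry of $\Phi$ and Fubini on the bounded domain $\Omega$. This needs no Neumann series, and it applies with $w=u^t(d_1)$ and $v=u^t(d_2)$, with no further assumptions beyond the solvability already cited from \cite{colton2012inverse,kirsch2017remarks}. Checking the constant in the far-field asymptotic of $\Phi$ against \eqref{full model 1} is routine.
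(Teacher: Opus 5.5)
Your proof is correct, but it is not the argument behind the paper's citation. The paper gives no proof and refers to \cite{cakoni2014qualitative}, where reciprocity comes from Green's second identity applied to $u^t(\cdot;d_1)$ and $u^t(\cdot;d_2)$ on a large disk $|x|<R$. There the $k^2(1+q)$ terms cancel, and the incident--incident and scattered--scattered boundary terms vanish because those fields are entire, respectively radiating. The cross terms are then identified with far-field patterns through Green's boundary representation of $u^\infty$.

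Despite your opening sentence, you never use Green's identity; you work entirely with the Lippmann--Schwinger equation. Your ingredients are:
\begin{itemize}
\item the volume formula $u^\infty(\hat x;\hat\theta;k)=k^2\int_\Omega e^{-ik\hat x\cdot y}q(y)u^t(y;\hat\theta;k)\ind y$;
\item the symmetry $\langle Tw,v\rangle_q=\langle w,Tv\rangle_q$ for the unconjugated bilinear pairing, which is legitimate for complex $q$ because $\Phi(x,y)=\Phi(y,x)$;
\item the identity $\langle u^t(d_1),(I-T)u^t(d_2)\rangle_q=\langle (I-T)u^t(d_1),u^t(d_2)\rangle_q$ with $(I-T)u^t(d)=u^i(d)$, which needs only existence of $u^t$, not invertibility of $I-T$.
\end{itemize}
Your route uses nothing beyond the integral equation already quoted in the paper and avoids boundary integrals at infinity. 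Along the way it produces the exact nonlinear counterpart of the Born far-field formula, with the incident field replaced by the total field. The Green's-identity route, in exchange, applies unchanged to impenetrable obstacles and other boundary conditions, where no volume potential with a symmetric weight $q$ is available.

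Two small repairs. First, delete the sentence claiming it is equivalent to show that $\int_\Omega q\,u^t(d_1)u^t(d_2)-\int_\Omega q\,u^s(d_1)u^s(d_2)$ is symmetric. That expression is symmetric in $(d_1,d_2)$ for trivial reasons and implies nothing; your transposition argument is what actually carries the proof. Second, the Fubini step does not rest on boundedness of $\Omega$ alone. It rests on the logarithmic singularity of $\Phi$, which gives $\sup_{x\in\Omega}\int_\Omega|\Phi(x,y)|\ind y<\infty$. By Schur's test this yields absolute convergence of the double integral for $w,v\in L^2(\Omega)$ and $q\in L^\infty(\Omega)$.
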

Now we are ready to prove the following property.
\begin{prop} \label{prop: define u(p)}
    For any point $p \in B$ and $p\not=0$, there exist only two incident-observation pairs $(\hat{x}_j,\hat{\theta}_j)_{j=1}^2$ such that
    $
    p = \frac{\hat{\theta}_j - \hat{x}_j}{2}$ for $j =1,2$,
    where $\hat{\theta}_2 = - \hat{x}_1$ and $\hat{x}_2 = - \hat{\theta}_1$.
   Let $c=2k$, define the following processed data
$$
u(p;c) = \frac{1}{k^2} u^\infty(\hat{x};\hat{\theta};k),  \mbox{ where }   p = \frac{\hat{\theta} - \hat{x}}{2}   \mbox{ for some incidient-observation pair } (\hat{\theta},\hat{x}).
$$
The processed data set $\{u(p): p \in B\}$ is uniquely defined almost everywhere.
\end{prop}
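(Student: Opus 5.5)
Our plan is to handle the geometry first and then use the reciprocity relation of the preceding lemma. Fix $p \in B$ with $p \neq 0$ and write $p = |p|\hat{p}$ with $0<|p|<1$. We look for unit vectors $\hat{\theta}, \hat{x}$ with $\hat{\theta} - \hat{x} = 2p$. Decompose both vectors along $\hat{p}$ and its orthogonal unit vector $\hat{p}^\perp$. Since $|\hat{\theta}|=|\hat{x}|=1$, we have $|\hat{\theta}|^2 - |\hat{x}|^2 = (\hat{\theta}-\hat{x})\cdot(\hat{\theta}+\hat{x}) = 0$, so $\hat{\theta}+\hat{x} \perp p$. Hence $\hat{\theta}+\hat{x} = 2s\,\hat{p}^\perp$ for some real $s$, and
$$
\hat{\theta} = p + s\hat{p}^\perp, \qquad \hat{x} = -p + s\hat{p}^\perp .
$$
The unit-length condition gives $|p|^2 + s^2 = 1$, so $s = \pm\sqrt{1-|p|^2}$. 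These are two distinct values because $|p|<1$. This yields exactly two pairs $(\hat{x}_j,\hat{\theta}_j)$, $j=1,2$, corresponding to $s_1 = \sqrt{1-|p|^2}$ and $s_2 = -s_1$.

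Next we check the stated relation between the two pairs directly. From the formulas,
$$
-\hat{x}_1 = p - s_1\hat{p}^\perp = p + s_2\hat{p}^\perp = \hat{\theta}_2, \qquad -\hat{\theta}_1 = -p - s_1\hat{p}^\perp = \hat{x}_2 .
$$
Consistency is automatic: the pair $(-\hat{\theta}_1, -\hat{x}_1)$, used as (observation, incidence), gives $\frac{-\hat{x}_1 - (-\hat{\theta}_1)}{2} = p$. So the second pair is exactly the reciprocal pair of the first.

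For well-definedness, the only ambiguity in $u(p;c)$ is the choice between these two pairs. By the reciprocity lemma,
$$
u^\infty(\hat{x}_2;\hat{\theta}_2;k) = u^\infty(-\hat{\theta}_1;-\hat{x}_1;k) = u^\infty(\hat{x}_1;\hat{\theta}_1;k).
$$
Therefore $u(p;c)$ takes the same value for both choices, and it is uniquely defined for every $p \in B\setminus\{0\}$.

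The exceptional point $p=0$ corresponds to $\hat{\theta}=\hat{x}$, the forward-scattering case, which leaves a full circle of admissible pairs. We do not need to resolve it: $\{0\}$ is a set of Lebesgue measure zero in $B$, which gives the ``almost everywhere'' statement. The main point requiring care is the exact count of two pairs, which relies on $|p|<1$. The boundary $|p|=1$ is excluded since $B$ is the open unit disk; there the two pairs would collapse into one, but that would be harmless anyway.
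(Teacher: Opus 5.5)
Your proposal is correct and follows the same route as the paper. The paper identifies the two pairs with $\hat{\theta}_2=-\hat{x}_1$, $\hat{x}_2=-\hat{\theta}_1$ and applies reciprocity, $u^\infty(\hat{x}_2;\hat{\theta}_2;k)=u^\infty(-\hat{\theta}_1;-\hat{x}_1;k)=u^\infty(\hat{x}_1;\hat{\theta}_1;k)$. You additionally give the explicit geometric count via $\hat{\theta}+\hat{x}\perp p$ and $s=\pm\sqrt{1-|p|^2}$, which the paper leaves as ``one can directly see.'' You also make explicit that the null set $\{p=0\}$ is what the ``almost everywhere'' qualifier is there for.
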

\begin{proof}
    For any $p \in B\backslash\{0\}$, one can directly see that there exist only two incident-observation pairs $(\hat{x}_j,\hat{\theta}_j)_{j=1}^2$ such that
    $
    p = \frac{\hat{\theta}_j - \hat{x}_j}{2}$ for $j =1,2$,
    where $\hat{\theta}_2 = - \hat{x}_1$ and $\hat{x}_2 = - \hat{\theta}_1$. Note the reciprocity relation, one finds that
    $$
    u^\infty(\hat{x}_2;\hat{\theta}_2;k) = u^\infty(-\hat{\theta}_1;-\hat{x}_1;k) = u^\infty(\hat{x}_1;\hat{\theta}_1;k)
    $$
    which defines the processed datum $u(p)$ with $p=\frac{\hat{\theta}_1 - \hat{x}_1}{2}$ uniquely. This completes the proof.
\end{proof}
The above result motivates a reformulation of the inverse scattering problem as follows. Recall $c=2k$, define the forward map 
$$
\mathcal{F}: L^2(B) \to L^2(B) \qquad \mbox{by} \qquad q \longmapsto u(\cdot; c)
$$
where the processed datum $u(p;c)$ for $p\in B$ is uniquely defined almost everywhere, cf. Proposition \ref{prop: define u(p)} . The reformulated inverse scattering problem is to determine the unknown $q \in L^2(B)$  from the processed data 
\begin{equation*}\label{eqn processed data}
\{u(p): p \in B\} \mbox{ or a perturbed set} \{u^\delta(p): p \in B\}.    
\end{equation*}
Correspondingly the Born processed data $\{u_b(p): p \in B\}$ are given by
\begin{align}\label{Born fourier}
    u_b(p;c)& = \int_B e^{ic p \cdot y}q(y)  \ind y.
\end{align}
It is  worth noting that the study on the  inverse problem for the Born model is important since it has a close relationship to the fully nonlinear case, cf. \cite{kirsch2017remarks} and \cite{moskow2008convergence}. 
\section{A linearized low-rank structure} \label{section: linearized low-rank structure}
In the Born or linearized region, the Born data are related to the unknown $q$ via
$$
\mathcal{F}_b: L^2(B) \to L^2(B) \qquad \mbox{by} \qquad q \longmapsto u_b(\cdot; c) 
$$
where $u_b(\cdot; c)$ is given by \eqref{Born fourier} and $B=B(0,1)$ denotes the unit disk.

To study this Born forward map $\mathcal{F}_b$ in the context of inverse scattering, a set of basis functions was proposed in \cite{meng23data} based on the generalizations of prolate spheroidal wave functions (PSWFs). The PSWFs and their generalizations were studied in a series of work  \cite{Slepian64,slepian1961prolate} in the 1960s. We refer to \cite{osipov2013prolate} for a comprehensive introduction to the one dimensional PSWFs and to \cite{greengard2024generalized,ZLWZ20} for more recent studies on multidimensional generalizations of the PSWFs. For the two dimensional inverse scattering problem, we rely on the generalization of  PSWFs to two dimensions \cite{Slepian64}. It was known \cite{Slepian64} that there exist real-valued  eigenfunctions $\{\psi_{m,n,l}(x;c)\}^{l\in\mathbb{I}(m)}_{m,n\in \mathbb{N}}$ of the restricted Fourier transform with parameter c such that
    \begin{align}\label{eigen_R_Fourier}
        \mathcal{F}_{b}  \psi_{m,n,l}(x;c)=\int_{B}e^{ic x\cdot y}\psi_{m,n,l}(y;c) \ind y =\alpha_{m,n}(c)\psi_{m,n,l}(x;c),\quad x\in B,
    \end{align}
    where $\mathbb{N}=\{0,1,2,3,\dots\}$ and
    \begin{eqnarray*}
        \mathbb{I}(m)=\left\{
            \begin{array}{cc}
                \{1\} & m=0 \\
                \{1,2\} & m \geq 1
            \end{array}\right..
    \end{eqnarray*}
In this work we refer to $\psi_{m,n,l}(x;c)$   as the disk PSWFs and to $\alpha_{m,n}(c)$  as the prolate eigenvalues.

One of the most important properties of the disk PSWFs is  the so-called dual property. A direct calculation using \cite{Slepian64} (cf. \cite{ZLWZ20}) shows that the disk PSWFs are also eigenfunctions of a Sturm-Liouville operator, i.e.,
\begin{equation}\label{sturm-liouvill}
    \mathcal{D}  [\psi_{m,n,l}](x)=\chi_{m,n} \psi_{m,n,l}(x),\quad x\in B,
\end{equation}
where
\begin{align*}
    \mathcal{D}  := -(1-r^2)\partial_r^2-\frac{1}{r}\partial_r+3r\partial_r-\frac{1}{r^2}\Delta_0+c^2r^2
\end{align*}
and the Laplace–Beltrami operator $\Delta_0= \partial^2_\theta $ is the spherical part of Laplacian $\Delta$. More details can be found in \cite{meng23data} and \cite{ZLWZ20}. We further refer to $\chi_{m,n}(c)$  as the Sturm-Liouville eigenvalue.
The following properties of disk PSWFs can be found in \cite{Slepian64} and \cite{ZLWZ20}.
\begin{lemma} \label{lemma: SL}
Let $c>0$ be a positive real number.
\begin{itemize}
\item[]
(a)~$\{\psi_{m,n,l}(x;c)\}^{l\in\mathbb{I}(m)}_{m,n\in \mathbb{N}}$ forms a complete and orthonormal system of $L^2(B)$, i.e., for any $m,~n,~m',~n'\in\mathbb{N},~l\in\mathbb{I}(m)$ and $l'\in\mathbb{I}(m')$, it holds that
   \begin{equation*}
       \int_{B}\psi_{m,n,l}(y;c)\psi_{m',n',l'}(y;c)\ind y=\delta_{m m'}\delta_{n n'}\delta_{l l'},
   \end{equation*}
where $\delta$ denotes the Kronecker delta. 
\item[]
(b)~The corresponding  Sturm-Liouville eigenvalues $\{\chi_{m,n}\}_{m,n\in \mathbb{N}}$ in \eqref{sturm-liouvill} are real positive which  are ordered for fixed $m$ as follows
       \begin{equation*}
       0<\chi_{m,0}(c)<\chi_{m,1}(c)<\chi_{m,2}(c)<\cdots .
   \end{equation*}
\item[]
(c)~Every prolate eigenvalue $\alpha_{m,n}(c)$ is  non-zero, and $\lambda_{m,n} = |\alpha_{m,n}(c)|$ can be arranged for fixed $m$ as
    \begin{equation*}
       \lambda_{m,n_1}(c)>\lambda_{m,n_2}(c)>0, \quad \forall n_1<n_2.
   \end{equation*}
   Moreover $\lambda_{m,n}(c)\longrightarrow 0$  as  $m,n\longrightarrow +\infty$.
\end{itemize}
\end{lemma}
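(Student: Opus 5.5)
Lemma \ref{lemma: SL} collects known facts. The plan is to derive all three parts from one structural observation: $\mathcal{F}_b$ is a compact normal operator that commutes with the Sturm--Liouville operator $\mathcal{D}$. Afterwards, separation of variables in polar coordinates reduces everything to a family of one-dimensional singular Sturm--Liouville problems indexed by the angular frequency $m$.

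\textbf{Step 1: the operator setting.} First, $\mathcal{F}_b$ has the smooth kernel $e^{icx\cdot y}$ on $B\times B$, so it is Hilbert--Schmidt and hence compact on $L^2(B)$. Its adjoint has kernel $e^{-icx\cdot y}$, which gives
\[
\mathcal{F}_b^*=\mathcal{F}_b\circ R, \qquad R f(x)=f(-x).
\]
Since $R$ commutes with $\mathcal{F}_b$, the operator is normal. Next, $\mathcal{D}$ is formally self-adjoint on $L^2(B)$. Writing $-(1-r^2)\partial_r^2-\frac1r\partial_r+3r\partial_r = -\frac1r\partial_r\big(r(1-r^2)\partial_r\big)$, it takes divergence form
\[
\mathcal{D} = -\nabla\cdot\big((I-xx^T)\nabla\big)+c^2|x|^2 .
\]
The coefficient degenerates on $\partial B$, so no boundary condition is needed beyond boundedness. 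The key identity is $\mathcal{D}\mathcal{F}_b=\mathcal{F}_b\mathcal{D}$. I would verify it by applying $\mathcal{D}_x$ to the kernel $e^{icx\cdot y}$ and checking that the result equals $\mathcal{D}_y$ applied to the same kernel, a symmetric expression in $x$ and $y$. The boundary terms from integrating by parts in $y$ vanish because the weight $1-|y|^2$ is zero on $\partial B$. This classical Slepian computation is the main obstacle: it is a routine but delicate calculation, and the boundary terms must be handled carefully.

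\textbf{Step 2: separation in polar coordinates.} Both operators commute with rotations. On the angular modes $\cos m\theta$ and $\sin m\theta$, which give the index $l\in\mathbb{I}(m)$, the operator $\mathcal{D}$ reduces to
\[
\mathcal{D}_m=-\frac1r\big(r(1-r^2)\phi'\big)'+\frac{m^2}{r^2}\phi+c^2r^2\phi
\]
on $L^2((0,1);r\,dr)$. Correspondingly, $\mathcal{F}_b$ reduces, via the Jacobi--Anger expansion, to the Hankel-type integral operator
\[
i^m\,2\pi\int_0^1 J_m(crr')\,\phi(r')\,r'\,dr'.
\]
The operator $\mathcal{D}_m$ is a singular Sturm--Liouville operator with limit-point/limit-circle endpoints at $r=0$ and $r=1$. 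Choosing the bounded solutions gives a self-adjoint realization with compact resolvent. This yields simple, discrete eigenvalues tending to infinity; simplicity holds because the bounded solution space at an endpoint is one-dimensional. Positivity follows from the quadratic form
\[
\int_0^1\Big(r(1-r^2)|\phi'|^2+\frac{m^2}{r}|\phi|^2+c^2r^3|\phi|^2\Big)dr>0 .
\]
This proves (b).

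\textbf{Step 3: conclusions.} Since each eigenvalue of $\mathcal{D}_m$ is simple and the Hankel operator commutes with $\mathcal{D}_m$, every eigenfunction of $\mathcal{D}_m$ is also an eigenfunction of the Hankel operator. The eigenfunctions of $\mathcal{D}_m$ form an orthonormal basis of $L^2((0,1);r\,dr)$, so together with the angular Fourier basis we obtain (a). They can be taken real-valued, since $\mathcal{D}_m$ has real coefficients.

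For (c), nonvanishing of $\alpha_{m,n}$ follows from injectivity of $\mathcal{F}_b$: if $\mathcal{F}_b f=0$, then $\hat f$ is an entire function vanishing on an open set, so $\hat f\equiv 0$ and hence $f=0$. Decay $\lambda_{m,n}\to0$ follows from compactness of $\mathcal{F}_b$. The strict ordering of $\lambda_{m,n}$ in $n$ matching the ordering of $\chi_{m,n}$ is the subtle part. I would follow Slepian and study the scaled operator $\mathcal{F}_b^*\mathcal{F}_b$, which is a positive kernel operator on each $m$-block. The plan is to use oscillation theory: the $n$-th eigenfunction of $\mathcal{D}_m$ has exactly $n$ zeros in $(0,1)$. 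Combined with total positivity of the Bessel-type kernel (a Kellogg-type argument), this would give strictly decreasing $|\alpha_{m,n}|$ in $n$. Should that argument prove too long, I would instead cite \cite{Slepian64,ZLWZ20} for this ordering.
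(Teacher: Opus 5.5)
\textbf{Verdict.} Your proofs of (a), (b), the non-vanishing of $\alpha_{m,n}$ and the decay $\lambda_{m,n}\to 0$ are sound. They also go beyond the paper, which gives no argument for this lemma and only cites \cite{Slepian64} and \cite{ZLWZ20}. The step you call the main obstacle is actually a one-line check:
\[
\mathcal{D}_x e^{icx\cdot y}=\big(3ic\,x\cdot y+c^2|x|^2+c^2|y|^2-c^2(x\cdot y)^2\big)e^{icx\cdot y},
\]
which is symmetric in $x,y$, and the boundary term vanishes because $y^T(I-yy^T)=0$ on $\partial B$. The real difficulty is the strict ordering in (c), and your plan for it fails.

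\textbf{Why the Kellogg route fails.} Kellogg's theorem needs an oscillation (totally positive) kernel. In particular every minor $\det[K(r_i,r'_j)]$ at ordered points must be nonnegative, so $K\ge 0$ pointwise. Let $j_{m,k}$ denote the $k$-th positive zero of $J_m$.
\begin{itemize}
\item[] The Hankel kernel $J_m(crr')$ is not totally positive for any $c>0$. Write $J_m(crr')=(rr')^m g(r^2r'^2)$ with $g(s)=\frac{(c/2)^m}{m!}\big(1-\frac{(c/2)^2}{m+1}s+O(s^2)\big)$. Then a $2\times 2$ minor at points near the origin equals, to leading order, $-\frac{(c/2)^{2m+2}}{(m!)^2(m+1)}(r_1r_2r_1'r_2')^m(r_2^2-r_1^2)(r_2'^2-r_1'^2)<0$. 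For $c>j_{m,1}$ the kernel itself changes sign.
\item[] Passing to $\mathcal{F}_b^*\mathcal{F}_b$ does not help. It is a positive operator, but its block kernel is not a positive function. For $m=0$ it is $4\pi^2\int_0^1 J_0(crt)J_0(cr't)\,t\,dt$, whose value at $r'=0$ is $4\pi^2 J_1(cr)/(cr)$. This is negative for $cr\in(j_{1,1},j_{1,2})$, so pointwise positivity fails for every $c>j_{1,1}\approx 3.83$; the paper's numerics use $c=20,30$.
\end{itemize}
Operator positivity together with the zero count of the eigenfunctions does not order the eigenvalues. As written, you would therefore fall back on the citation, exactly as the paper does.

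\textbf{How to repair it.} A self-contained proof needs a mechanism that works for every $c>0$; continuation in $c$ does this. Write $\alpha_{m,n}=2\pi i^m\gamma_{m,n}$, where $\gamma_{m,n}$ are the real eigenvalues of $H_m\phi(r)=\int_0^1 J_m(crt)\phi(t)\,t\,dt$ with radial eigenfunctions $\phi_{m,n}$.
\begin{itemize}
\item[] From $r\partial_r J_m(crt)=t\partial_t J_m(crt)$ and one integration by parts, $r(H_m\phi)'=\phi(1)J_m(cr)-H_m(2\phi+t\phi')$. Pairing with $\phi_{m,p}$ gives, for $n\neq p$,
\[
(\gamma_{m,n}+\gamma_{m,p})\int_0^1 r\,\phi_{m,n}'(r)\,\phi_{m,p}(r)\,r\,dr=\gamma_{m,p}\,\phi_{m,n}(1)\,\phi_{m,p}(1)\neq 0 .
\]
The right side is nonzero because $\gamma_{m,p}\neq 0$, and because a bounded nonzero solution of the radial equation cannot vanish at the regular singular point $r=1$ (double indicial root $0$). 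Hence $\gamma_{m,n}+\gamma_{m,p}\neq 0$ for all $c$.
\item[] For small $c$, the power series of $J_m$ gives $\gamma_{m,n}(c)\sim(-1)^n C_{m,n}c^{m+2n}$ with $C_{m,n}>0$.
\item[] Since the $\chi_{m,n}$ are simple, $\gamma_{m,n}(c)$ is continuous in $c$ and never zero, so its sign stays $(-1)^n$.
\item[] Then $|\gamma_{m,n}|=|\gamma_{m,n+1}|$ would force $\gamma_{m,n}+\gamma_{m,n+1}=0$, which is impossible. So the strict ordering that holds for small $c$ persists for every $c>0$.
\end{itemize}
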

 We emphasize that a direct evaluation of the disk PSWFs solely based on the restricted Fourier transform is not reliable as the leading prolate eigenvalues have numerically the same amplitude, cf. \cite{greengard2024generalized,ZLWZ20,zhou2024exploring}. Instead, Lemma \ref{lemma: SL} implies that  the disk PSWFs can be computed using the Sturm-Liouville differential operator to ensure stability and efficiency. Since the prolate eigenvalues decay to zero very fast, one can chose a finite dimensional set of the disk PSWFs to form a low-rank structure. We refer to \cite{meng23data} for a theoretical analysis  using such   basis functions for solving the inverse scattering problem and to \cite{zhou2024exploring} for a detailed computational treatment.

\subsection{Preliminaries about disk PSWFs}
In this section, we provide the preliminaries for the analytical and computational treatment of the  disk PSWFs. Using polar coordinates, each disk PSWF  $\psi_{m,n,l} (x;c)$ can be obtained by separation of variables (cf. \cite{greengard2024generalized,ZLWZ20})
\begin{equation*}
    \psi_{m,n,l}(x;c)={r^m}\varphi_{m,n}(2{ r}^2-1;c)Y_{m,l}(\hat{x}),\quad x\in B,
\end{equation*}
where $x=r\hat{x}=(r\cos{\theta},r\sin{\theta})^T$ and the spherical harmonics $Y_{m,l}(\hat{x})$ are given by
\begin{align}\label{spherical_harmonic}
    Y_{m,l}(\hat{x})=\left\{
            \begin{array}{cc}
                \frac{1}{\sqrt{2\pi}}, & m=0,l=1 \\
                \frac{1}{\sqrt{\pi}}\cos{m\theta}, & m\geq 1,l=1 \\
                \frac{1}{\sqrt{\pi}}\sin{m\theta},& m\geq 1,l=2
            \end{array}\right. .
\end{align}
An efficient method to evaluate the disk PSWFs is to expand $\varphi_{m,n}(\eta;c)$ by normalized Jacobi polynomials $\{P^{(m)}_{j}(\eta)\}^{j\in\mathbb{N}}_{\eta\in (-1,1)}$, i.e. $\varphi_{m,n}(\eta;c)=\sum_{j=0}^{\infty}\beta_j^{m,n}(c) P^{(m)}_j(\eta)$.
With the help of the Sturm-Liouville problem \eqref{sturm-liouvill}, the  coefficients $\{\beta_j^{m,n}(c)\}$ can be solved via a tridiagonal linear system, cf.  \cite{greengard2024generalized,ZLWZ20} and \cite{zhou2024exploring}. Here the normalized Jacobi polynomials $\{P_n^{(m)}(x)\}_{x\in (-1,1) }$ can be obtained through the three-term recurrence relation
\begin{align*}
         P^{(m)}_{n+1}(x)&=\frac{1}{a_n} [ (x-b_n) P_n^{(m)}(x)-a_{n -1}P^{(m)}_{n-1}(x) ],\quad n\geq 1\\
    P^{(m)}_{0}(x)&=\frac{1}{h_0 },\quad P^{(m)}_{1}(x)=\frac{1}{2h_1  }[(m+2)x-m],
\end{align*}
where $h_{0}= \frac{1}{\sqrt{2(m+1)}}$, $h_{1}= \frac{1}{\sqrt{2(m+3)}}$, and
\begin{align*}
   \left\{
            \begin{array}{cc}
                a_{n}=&\frac{2(n+1)(n+m+1)}{(2 n+m+2)\sqrt{(2 n+{m+1})(2 n+m+3)}} \\
                b_{n}=&\frac{m^{2}}{(2 n+m)(2 n+m+2)} 
            \end{array}\right. , \qquad n\in\mathbb{N}.
\end{align*}
For a more comprehensive introduction to special polynomials, we refer to \cite{Abramowitz64}.

 \section{Stability estimate in a low-rank space} \label{section: stability}
 In this section, we investigate the Lipschitz stability in a low-rank space spanned by finitely many disk PSWFs.
We first recall the following abstract Lipschitz stability for inverse problems given in \cite{bourgeois2013remark} (see also \cite{alberti2022infinite}).
\begin{lemma}\label{lemma abstract Lipschitz stability inverse}
    Let $X$ and $Y$ be Banach spaces. Let $A \subseteq X$ be an open subset, $W \subseteq X$ be a finite-dimensional subspace and $K \subseteq W \cap A$ be a compact and convex subset. Let the operator $\mathcal{Y} \in C^1(A,Y)$ be such that $\mathcal{Y}|_{W \cap A}$ and $\mathcal{Y}'(x)|_W$, $x \in W \cap A$, are injective, where $\mathcal{Y}'$ denotes the Fr\'{e}chet derivative.

    Then there exists a constant $C>0$ such that
    $$
    \|x_1 - x_2\|_X \le C \|\mathcal{Y} (x_1) - \mathcal{Y}(x_2)\|_Y, \qquad x_1,x_2 \in K. 
    $$
\end{lemma}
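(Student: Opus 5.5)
The plan is to argue by contradiction and use compactness of $K$, in the spirit of \cite{bourgeois2013remark}. Suppose no such $C$ exists. Then there are sequences $x_1^{(j)},x_2^{(j)}\in K$ with $x_1^{(j)}\neq x_2^{(j)}$ and
$$
\|\mathcal{Y}(x_1^{(j)})-\mathcal{Y}(x_2^{(j)})\|_Y < \tfrac{1}{j}\,\|x_1^{(j)}-x_2^{(j)}\|_X .
$$
Since $K$ is compact, after passing to a subsequence we may assume $x_1^{(j)}\to x_1$ and $x_2^{(j)}\to x_2$ with $x_1,x_2\in K$. Because $K$ is bounded, the right-hand side is bounded by $\mathrm{diam}(K)/j\to 0$. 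Continuity of $\mathcal{Y}$ then gives $\mathcal{Y}(x_1)=\mathcal{Y}(x_2)$. Injectivity of $\mathcal{Y}|_{W\cap A}$ forces $x_1=x_2=:\bar x\in K\subseteq W\cap A$.

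Next, we normalize the differences and pass to the derivative. Set
$$
h_j=\frac{x_1^{(j)}-x_2^{(j)}}{\|x_1^{(j)}-x_2^{(j)}\|_X}\in W .
$$
Each $h_j$ is a unit vector in the finite-dimensional space $W$, so a further subsequence gives $h_j\to h\in W$ with $\|h\|_X=1$. By convexity of $K$, the segment $[x_2^{(j)},x_1^{(j)}]$ lies in $K\subseteq A$. The mean value theorem in integral form, valid since $\mathcal{Y}\in C^1(A,Y)$, gives
$$
\mathcal{Y}(x_1^{(j)})-\mathcal{Y}(x_2^{(j)})=\int_0^1 \mathcal{Y}'\big(x_2^{(j)}+t(x_1^{(j)}-x_2^{(j)})\big)\,(x_1^{(j)}-x_2^{(j)})\,\ind t .
$$
Dividing by $\|x_1^{(j)}-x_2^{(j)}\|_X$, the left-hand side tends to $0$ by the contradiction assumption.

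For the right-hand side, every point on the segment converges to $\bar x$ uniformly in $t$. Continuity of $\mathcal{Y}'$ in operator norm at $\bar x$, together with $h_j\to h$, then shows that the normalized integral converges to $\mathcal{Y}'(\bar x)h$. Hence $\mathcal{Y}'(\bar x)h=0$ with $h\in W$ and $h\neq 0$. This contradicts the injectivity of $\mathcal{Y}'(\bar x)|_W$ at the point $\bar x\in W\cap A$, which proves the lemma.

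The main delicate step is this last limit. It needs the uniform operator-norm closeness $\sup_{t}\|\mathcal{Y}'(\text{segment point})-\mathcal{Y}'(\bar x)\|\to 0$, which follows from continuity of $\mathcal{Y}'$ at $\bar x$ because the segments shrink to $\bar x$. It also uses convexity of $K$ so that the segment stays inside $A$, where $\mathcal{Y}$ is differentiable. The finite dimensionality of $W$ is essential only for extracting the convergent subsequence of the unit vectors $h_j$ with a nonzero limit.
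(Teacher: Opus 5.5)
Your proof is correct and follows the standard contradiction-and-compactness argument of \cite{bourgeois2013remark}, which the paper cites in place of a proof. Injectivity of $\mathcal{Y}|_{W\cap A}$ makes the two limit points coincide, and compactness of the unit sphere of $W$ together with the integral mean value formula then gives a nonzero $h\in W$ with $\mathcal{Y}'(\bar x)h=0$. As a minor aside, your argument does not need convexity of $K$: for large $j$ both $x_1^{(j)}$ and $x_2^{(j)}$ lie in a ball around $\bar x$ contained in the open set $A$, so the segment between them stays in $A$ automatically.
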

In general, the Lipschitz constant $C$ tends to infinity as $\dim(W) \to +\infty$ for ill-posed problems. Lemma \ref{lemma abstract Lipschitz stability inverse} can be   applied to the inverse medium scattering problem with far-field data, cf. \cite{bourgeois2013remark}. Specifically, let $X=L^\infty (B)$, $Y=L^2(\mathbb{S} \times \mathbb{S})$, $A = L^\infty_+(B):=\{ q \in L^\infty(B): \mbox{Im}(q) \ge \lambda \mbox{ in } B \mbox{ for some } \lambda>0\}$, $W$ be a finite-dimensional subspace of $L^\infty(B)$ and $K$ be a convex and compact subset of $W \cap A$, then it follows \cite{bourgeois2013remark} that the assumptions of Lemma \ref{lemma abstract Lipschitz stability inverse} can be verified and that 
   \begin{equation} \label{eqn Lipschitz stability inverse medium}
        \|q_1 - q_2\|_{L^\infty(B)} \le C \|u^\infty_{1} - u^\infty_{2}\|_{L^2(\mathbb{S} \times \mathbb{S})}, \qquad q_1,q_2 \in K,   
   \end{equation}
    where $C>0$ is a constant, and  $u^\infty_{j}$ denotes the far-field pattern for $q_j$, $j=1,2$. 

Due to the fact that the disk PSWFs are the eigenfunctions of the Born forward operator \eqref{Born fourier}, it is natural to look for unknowns in a low-rank space spanned by finitely many disk PSWFs. We now prove the following theorem.
    \begin{theorem} \label{theorem Lipschitz in low-rank uknown} Given a positive constant $\eta>0$,
        let $W = \mbox{span}\{ \psi_{m,n,\ell}(\cdot;c): {|\alpha_{m,n}(c)|} > \eta\} \cap L^\infty(B)$ and $K$ be a convex and compact subset of $W \cap L^\infty_+(B)$. Let $u_j(\cdot;c)$ be defined via Proposition \ref{prop: define u(p)} for contrast $q_j$, $j=1,2$,  then there exists a constant $C_0>0$ such that
        $$
    \|q_1 - q_2\|_{L^2(B)} \le C_0 \|u_1(\cdot;c) - u_2(\cdot;c)\|_{L^2(B)}, \qquad q_1,q_2 \in K.
    $$  
    \end{theorem}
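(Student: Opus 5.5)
The plan is to reduce the statement to the abstract Lipschitz stability of Lemma \ref{lemma abstract Lipschitz stability inverse}, combined with norm equivalences that hold on finite-dimensional spaces.

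First, I will check that $W$ is finite dimensional. By Lemma \ref{lemma: SL}(c), $\lambda_{m,n}(c)\to 0$ as $m,n\to\infty$, and for fixed $m$ the values $\lambda_{m,n}$ decrease strictly in $n$. So only finitely many index pairs $(m,n)$ satisfy $|\alpha_{m,n}(c)|>\eta$. To justify this carefully, I will also use the standard fact that $\mathcal{F}_b$ is compact (it is Hilbert--Schmidt, with a smooth kernel on $B\times B$), so its eigenvalues can accumulate only at $0$. Each such pair contributes at most two functions. The disk PSWFs are smooth on $\overline B$, since they are built from polynomials times spherical harmonics, so they lie in $L^\infty(B)$. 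Hence $W$ is a finite-dimensional subspace of $L^\infty(B)$, and on $W$ the norms $\|\cdot\|_{L^2(B)}$ and $\|\cdot\|_{L^\infty(B)}$ are equivalent. This gives $\|q_1-q_2\|_{L^2(B)}\le |B|^{1/2}\|q_1-q_2\|_{L^\infty(B)}$ directly, with no equivalence constant needed in this direction.

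Next, I will apply the known result \eqref{eqn Lipschitz stability inverse medium}, which is Lemma \ref{lemma abstract Lipschitz stability inverse} with $X=L^\infty(B)$, $Y=L^2(\mathbb{S}\times\mathbb{S})$ and $A=L^\infty_+(B)$. Injectivity of the forward map on $W\cap A$ follows from uniqueness of the inverse problem \cite{bukhgeim2008recovering}. Injectivity of the Fréchet derivative restricted to $W$ is taken from \cite{bourgeois2013remark}. This gives
\[
\|q_1-q_2\|_{L^\infty(B)}\le C\|u_1^\infty-u_2^\infty\|_{L^2(\mathbb{S}\times\mathbb{S})},\qquad q_1,q_2\in K.
\]

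The main step is to bound $\|u^\infty_1-u^\infty_2\|_{L^2(\mathbb{S}\times\mathbb{S})}$ by $\|u_1-u_2\|_{L^2(B)}$. For this I will use the change of variables $(\hat x,\hat\theta)\mapsto p=(\hat\theta-\hat x)/2$ from Proposition \ref{prop: define u(p)}. Write $\hat x=(\cos\phi,\sin\phi)$ and $\hat\theta=(\cos\vartheta,\sin\vartheta)$. Then $p$ has modulus $|p|=|\sin((\vartheta-\phi)/2)|$, and its direction is determined by $(\vartheta+\phi)/2$. Computing the Jacobian, I expect $dp=\tfrac14|\sin(\vartheta-\phi)|\,d\phi\, d\vartheta$ up to a constant, which equals $\tfrac12|p|\sqrt{1-|p|^2}\,d\phi\,d\vartheta$. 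The map is two-to-one, and reciprocity makes the two preimages carry equal values. Therefore
\[
\int_{\mathbb{S}\times\mathbb{S}}|u^\infty_1-u^\infty_2|^2 = 2k^4\int_B |u_1-u_2|^2(p)\,\frac{C'}{|p|\sqrt{1-|p|^2}}\,dp .
\]
The weight $1/(|p|\sqrt{1-|p|^2})$ is unbounded near $p=0$ and near $|p|=1$, so I cannot compare the $L^2$ norms directly. This is the obstacle. I will handle it with analyticity. For $q\in L^\infty$ supported in $\overline\Omega\subset B$, the function $u_q(p)$ extends to a real-analytic function of $p$, via the analytic dependence of the far field on $(\hat x,\hat\theta)$. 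The set $\{u_q:q\in K\}$ is the image of a compact set under a continuous map, and the same holds for differences. The estimate I need is the weighted-$L^2$ bound $\|g\|_{L^2_w(B)}\le C''\|g\|_{L^2(B)}$ on this set of differences $g$. I will prove it by contradiction using compactness of $K$, together with the fact that both norms vanish only when $g=0$ (by analyticity). The delicate point is the ratio near the diagonal $q_1\to q_2$. There I will linearize using the $C^1$ regularity of the forward map. The derivative restricted to $W$ maps into a finite-dimensional space of analytic functions, on which $\|\cdot\|_{L^2_w}$ and $\|\cdot\|_{L^2}$ are equivalent, and an argument of the same type as the proof of Lemma \ref{lemma abstract Lipschitz stability inverse} then closes the estimate.

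As a fallback, I can apply Lemma \ref{lemma abstract Lipschitz stability inverse} directly with $Y=L^2(B)$ and $\mathcal{Y}=\mathcal{F}$. The required hypotheses would then be: $\mathcal{F}\in C^1(A,L^2(B))$, which follows because the weight is integrable in the reverse direction and the far field is smooth; injectivity of $\mathcal{F}$ on $W\cap A$, which follows from uniqueness since $u$ determines $u^\infty$; and injectivity of $\mathcal{F}'(q)|_W$, which is the same as injectivity of the far-field derivative. This route sidesteps the weighted comparison entirely. Combined with the $L^2$--$L^\infty$ comparison on $W$ from the first step, it yields the constant $C_0$.
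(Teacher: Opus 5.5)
Your outline matches the paper's. Its proof is the chain $\|q_1-q_2\|_{L^2(B)}\le C_1\|q_1-q_2\|_{L^\infty(B)}$, then \eqref{eqn Lipschitz stability inverse medium}, then $\|u_1^\infty-u_2^\infty\|_{L^2(\mathbb{S}\times\mathbb{S})}\le C_2\|u_1-u_2\|_{L^2(B)}$. The paper justifies that last link only by ``the relation'' in Proposition~\ref{prop: define u(p)}. You genuinely differ there, and you are right to distrust it. Since $\mathrm{d}p=\tfrac14|\sin(\vartheta-\phi)|\,\mathrm{d}\phi\,\mathrm{d}\vartheta$ degenerates at forward and backward scattering, a change of variables gives only the reverse bound $\|u\|_{L^2(B)}\lesssim\|u^\infty\|_{L^2(\mathbb{S}\times\mathbb{S})}$.

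Your fallback is the cleanest fix, and I would make it the proof. Write $\mathcal{F}=T\circ\mathcal{F}^\infty$. Here the far-field map $\mathcal{F}^\infty$ is $C^1$ with values, hence also derivatives, in the closed subspace $R$ of reciprocity-symmetric functions. The map $T:R\to L^2(B)$, $v\mapsto v/k^2$ written in the variable $p$, is injective. It is bounded because the Jacobian factor is at most $\tfrac14$; that is the real reason, not integrability of the weight or smoothness of the far field. So $\mathcal{F}\in C^1(A,L^2(B))$. Injectivity of $\mathcal{F}|_{W\cap A}$ and of $\mathcal{F}'(q)|_W$ transfers from the far-field setting. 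Lemma~\ref{lemma abstract Lipschitz stability inverse} with $Y=L^2(B)$ then gives the result with no weighted comparison at all. The paper's route is shorter but leaves that norm comparison unjustified. Yours either proves it on the difference set of $K$ (main route) or shows it is not needed (fallback).

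In your main route, drop the analyticity claim, because it is false in general. As $p\to0$ along a direction $\hat p$, $u(p)$ tends to the forward-scattering amplitude $u^\infty(\hat p^{\perp};\hat p^{\perp};k)/k^2$. Outside the Born regime this depends on $\hat p$, so $u$ is generally discontinuous at $p=0$. Your contradiction argument does not need analyticity:
\begin{itemize}
\item The weight is positive and finite a.e., so $L^2_w$ and $L^2$ have the same null functions.
\item Off the diagonal, injectivity of the forward map plus convergence in $L^2_w$ (which dominates $L^2$) finishes the argument.
\item Near the diagonal, write $u_{q_1}-u_{q_2}=\int_0^1\mathcal{F}'(q_2+t(q_1-q_2))(q_1-q_2)\,\mathrm{d}t$, using convexity of $K$. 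Normalize by $\|q_1-q_2\|$, then use continuity of $\mathcal{F}'$ and injectivity of $\mathcal{F}'(q^*)|_W$ at the limit point $q^*$.
\end{itemize}
Norm equivalence on the finite-dimensional space $\mathcal{F}'(q)W$ for a fixed $q$ is not by itself the mechanism, since the base point moves along the sequence.
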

    \begin{proof}
    Note that $\|q_1 - q_2\|_{L^2(B)} \le C_1 \|q_1 - q_2\|_{L^\infty(B)}$ for some constant $C_1$, and $\|u^\infty_{1} - u^\infty_{2}\|_{L^2(\mathbb{S} \times \mathbb{S})} \le C_2 \|u_1(\cdot;c) - u_2(\cdot;c)\|_{L^2(B)}$ due to the relation between $u_j(\cdot;c)$ and $u^\infty_{j}$ in Proposition \ref{prop: define u(p)}, then the proof is completed by applying \eqref{eqn Lipschitz stability inverse medium}.
    \end{proof}
Furthermore, the processed data $u(\cdot;c)$ can also be approximated in a low-rank space $\mbox{span}\{ \psi_{m,n,\ell}(\cdot;c): {|\alpha_{m,n}(c)|} > \zeta\}$. We now prove the Lipschitz stablity where both the unknown and data belong to low-rank spaces. For the general Lipschitz stability with finite measurements in low-rank spaces, we refer to \cite{alberti2022infinite}.
\begin{theorem}\label{theorem Lipschitz in low-rank space unknown and data}
Under the assumptions in Theorem \ref{theorem Lipschitz in low-rank uknown}, let 
$$
\mathcal{P}_\zeta u_j(\cdot;c) =\sum_{|\alpha_{m,n}(c)| \ge \zeta}   \left\langle {   u_{j}},  \psi_{m,n,\ell}(\cdot;c) \right\rangle_{B}  \psi_{m,n,\ell}(\cdot;c),  
$$
then for any  $\epsilon \in (0,1)$,  there exists a constant $\zeta>0$  such that
        $$
    \|q_1 - q_2\|_{L^2(B)} \le \frac{C_0}{1-\epsilon}  \|\mathcal{P}_\zeta (u_1(\cdot;c)) - \mathcal{P}_\zeta (u_2(\cdot;c))\|_{L^2(B)}, \qquad q_1,q_2 \in K.
    $$     
\end{theorem}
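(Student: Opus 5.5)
The plan is to compare $\mathcal{P}_\zeta(u_1-u_2)$ with $u_1-u_2$, using that the tail $(I-\mathcal{P}_\zeta)$ becomes uniformly small relative to $\|u_1-u_2\|$ on $K$ as $\zeta\to0$. Since $\mathcal{P}_\zeta$ is linear, Theorem \ref{theorem Lipschitz in low-rank uknown} and the triangle inequality give
$$
\|q_1-q_2\|_{L^2(B)}\le C_0\|u_1-u_2\|_{L^2(B)}\le C_0\|\mathcal{P}_\zeta(u_1-u_2)\|_{L^2(B)}+C_0\|(I-\mathcal{P}_\zeta)(u_1-u_2)\|_{L^2(B)}.
$$
It therefore suffices to choose $\zeta$ with
\begin{equation*}
\|(I-\mathcal{P}_\zeta)(u_1-u_2)\|_{L^2(B)}\le \frac{\epsilon}{C_0}\|q_1-q_2\|_{L^2(B)}\qquad\text{for all } q_1,q_2\in K,
\end{equation*}
because then $(1-\epsilon)\|q_1-q_2\|\le C_0\|\mathcal{P}_\zeta(u_1-u_2)\|$, which is the claim.

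To obtain this tail bound, view the difference quotient map on $K\times K$ (off the diagonal)
$$
(q_1,q_2)\mapsto \frac{u_1-u_2}{\|q_1-q_2\|_{L^2(B)}} .
$$
I will argue that its range $\mathcal{R}$ is relatively compact in $L^2(B)$. The forward map $\mathcal{F}$ is $C^1$ on the open set $A$, as in Lemma \ref{lemma abstract Lipschitz stability inverse} and \cite{bourgeois2013remark}. Hence
$$
u_1-u_2=\int_0^1\mathcal{F}'(q_2+t(q_1-q_2))(q_1-q_2)\,\ind t ,
$$
where convexity of $K$ keeps the segment inside $K$. So every element of $\mathcal{R}$ has the form $\int_0^1\mathcal{F}'(q_t)h\,\ind t$, with $h$ ranging over the unit sphere of the finite-dimensional space $W$, which is compact, and $q_t$ ranging over the compact set $K$. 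The map $(q,h)\mapsto\mathcal{F}'(q)h$ is continuous on $K\times(\text{unit sphere of }W)$, so its image is compact. The set $\mathcal{R}$ lies in the closed convex hull of this image, and by Mazur's theorem that hull is compact. One point needs care: $W$ is finite dimensional, so the $L^2$ and $L^\infty$ norms are equivalent on $W$ and continuity transfers between them.

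The main obstacle is the uniform tail estimate on this compact set. Since $\{\psi_{m,n,\ell}\}$ is a complete orthonormal system (Lemma \ref{lemma: SL}(a)) and $|\alpha_{m,n}|\to0$ (Lemma \ref{lemma: SL}(c)), the projections $\mathcal{P}_\zeta$ converge strongly to $I$ as $\zeta\to0$, and each $\mathcal{P}_\zeta$ has norm $1$. Strong convergence of uniformly bounded operators is uniform on compact sets. This follows from a finite $\delta$-net argument: cover the compact closure of $\mathcal{R}$ by finitely many balls of radius $\epsilon/(3C_0)$, then pick $\zeta$ small enough that the tail of each ball center is below $\epsilon/(3C_0)$. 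This yields $\sup_{v\in\mathcal{R}}\|(I-\mathcal{P}_\zeta)v\|\le\epsilon/C_0$, which is exactly the required tail bound and completes the proof.
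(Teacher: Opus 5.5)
Your proof is correct, and it takes a genuinely different route from the paper's. Your route is the one that actually delivers the statement as quantified.

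The paper argues pointwise. For a fixed $u\in L^2(B)$, completeness of the disk PSWFs gives a $\zeta$ with $\|\mathcal{P}_\zeta u\|\ge(1-\epsilon)\|u\|$, and this is then applied to $u=u_1-u_2$. As written, that $\zeta$ depends on the pair $(q_1,q_2)$, whereas the theorem asserts one $\zeta$ for all $q_1,q_2\in K$. Some compactness is unavoidable here: no single $\zeta$ works on all of $L^2(B)$, since one can take $u=\psi_{m,n,\ell}$ with $|\alpha_{m,n}|<\zeta$.

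Your argument supplies exactly this uniformity:
\begin{itemize}
\item You measure the tail against $\|q_1-q_2\|$ rather than $\|u_1-u_2\|$, which reduces the problem to the set of difference quotients.
\item You show that set is relatively compact, using the integral mean-value formula on segments of the convex set $K$, continuity of $\mathcal{F}'$, and norm equivalence on the finite-dimensional $W$.
\item You then use that strong convergence $\mathcal{P}_\zeta\to I$ of uniformly bounded projections is uniform on compact sets.
\end{itemize}
The paper's route is shorter and uses only spectral facts about the PSWFs, but it really proves only the weaker, pair-dependent inequality. Yours uses the $C^1$, compactness and convexity hypotheses already in force for Theorem \ref{theorem Lipschitz in low-rank uknown}. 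It yields the uniform statement with the same constant $C_0/(1-\epsilon)$.

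Two small points to tighten.
\begin{itemize}
\item Strong convergence $\mathcal{P}_\zeta\to I$ as $\zeta\downarrow 0$ rests on every $\alpha_{m,n}$ being nonzero (Lemma \ref{lemma: SL}(c)), so that each index is eventually included. The decay $|\alpha_{m,n}|\to0$ only makes each $\mathcal{P}_\zeta$ finite rank.
\item The $C^1$ property in the setting of \cite{bourgeois2013remark} is for $q\mapsto u^\infty\in L^2(\mathbb{S}\times\mathbb{S})$. To transfer it to $\mathcal{F}$ with values in $L^2(B)$, note that the processing map $u^\infty\mapsto u(\cdot;c)$ is bounded. This holds because the two-to-one change of variables $(\hat x,\hat\theta)\mapsto p$ has Jacobian $\tfrac12|p|\sqrt{1-|p|^2}$, which is bounded on $B$.
\end{itemize}
You could also skip Mazur's theorem: the map $(q_1,q_2,h)\mapsto\int_0^1\mathcal{F}'(q_2+t(q_1-q_2))h\,\mathrm{d}t$ is continuous on the compact set $K\times K\times\{h\in W:\|h\|_{L^2(B)}=1\}$.
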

\begin{proof}
Since the disk PSWFs $\{\psi_{m,n,l}(x;c)\}^{l\in\mathbb{I}(m)}_{m,n\in \mathbb{N}}$  form a complete basis in $L^2(B)$ and $|\alpha_{m,n}|$ tends to zero as $m,n\to +\infty$, then for any $u(\cdot;c) \in L^2(B)$ and any sufficiently small $\epsilon>0$, there exists a sufficiently small $\zeta>0$ such that
\begin{eqnarray*}
\| \mathcal{P}_\zeta u(\cdot;c)\| =\|\sum_{|\alpha_{m,n}(c)| \ge  \zeta}   \left\langle {   u_{j}},  \psi_{m,n,\ell}(\cdot;c) \right\rangle  \psi_{m,n,\ell}(\cdot;c)\| \ge  (1-\epsilon)\| u(\cdot;c)\|,  
\end{eqnarray*}
where $\|\cdot\|$ denotes the $L^2(B)$-norm. Then by Theorem \ref{theorem Lipschitz in low-rank uknown}, one can prove that
    \begin{eqnarray*}
      \|q_1 - q_2\|_{L^2(B)} &\le& C_0 \|u_1(\cdot;c) - u_2(\cdot;c)\|_{L^2(B)} \le \frac{C_0}{1-\epsilon} \|\mathcal{P}_\zeta \big(u_1(\cdot;c) - u_2(\cdot;c) \big)\|_{L^2(B)}.     
    \end{eqnarray*}
This completes the proof.
\end{proof}
\begin{remark}
The disk PSWFs are the eigenfunctions of not only the Born forward operator \eqref{Born fourier} but also the differential operator \eqref{sturm-liouvill}. This dual property allows the quantification of the approximation capability of the proposed low-rank space in inverse scattering.  More precisely, for a given $\eta \in (0, |\alpha_{0,0}|)$, let $\beta_\eta>0$ be the smallest $\beta$ such that $\{(m,n,\ell): \chi_{m,n}(c) \le \beta^{-1} \} \subseteq \{(m,n,\ell): |\alpha_{m,n}(c)| \ge  \eta\}$  which can be chosen since $\chi_{m,n}(c)$ grows to infinity and $\alpha_{m,n}(c)$ decays to zero. Let $\mathcal{P}_\eta u$ be the projection of $u$ onto the finite dimensional space $\mbox{span}\{\psi_{m,n,\ell}: |\alpha_{m,n}(c)|>\eta\}$ 
 \begin{eqnarray*} 
\mathcal{P}_\eta u := \sum_{|\alpha_{m,n}(c)| \ge  \eta}  \left\langle u, \psi_{m,n,\ell}(\cdot;c) \right\rangle  \psi_{m,n,\ell}(\cdot;c), 
\end{eqnarray*}
then for any $q \in H^s(B)$ and $s \in (0,1)$, it holds that 
{\small
\begin{eqnarray*} \label{section SLP theorem u H^1 projection error}
\|  \mathcal{P}_\eta q - q \| \le \|  \sum_{|\chi_{m,n}(c)| \le  \beta_\eta^{-1}}  \left\langle q, \psi_{m,n,\ell}(\cdot;c) \right\rangle  \psi_{m,n,\ell}(\cdot;c) - q \|  \le  (\beta_\eta C)^{s/2}    (1+c^2)^{s/2}  \|q\|_{H^s(B)},
\end{eqnarray*}
}
for some positive constant $C\ge\sqrt{3}$ independent of $q$, $s$ and $c$; here the last inequality follows from \cite{meng23data}.
In principle, the better regularity of $q$ the better approximation capability in the proposed low-rank space.
\end{remark}

\subsection{Explicit Lipschitz constant in the linearized region}
One of the advantages of the proposed low-rank space is that one can obtain the following explicit Lipschitz estimate, in terms of computable eigenvalues. 
\begin{theorem} \label{lemma: Lipschitz stability in low-rankspace}
Let  $q_j \in \mbox{span}\{ \psi_{m,n,\ell}(\cdot;c): {|\alpha_{m,n}(c)|} > \eta\}$ with $\eta>0$,  and $u_{b,j}$ be the (processed) Born approximation given by \eqref{Born fourier}, $j=1,2$. Then for the reconstructed contrast   given by 
\begin{equation} \label{eqn Lipschitz stability inverse medium Born}
  q_j = \sum_{|\alpha_{m,n}(c)| \ge \eta}   \frac{1}{ \alpha_{m,n}(c) } \left\langle {   u_{b,j}},  \psi_{m,n,\ell}(\cdot;c) \right\rangle_{B}  \psi_{m,n,\ell}(\cdot;c), \qquad j=1,2, 
\end{equation}
the following Lipschitz stability holds
$$
\|{ q_1} - q_2\|_{L^2(B)}  \le  { \frac{1}{\eta}} \|u_{b,1} - u_{b,2}\|_{L^2(B)}.
$$
\end{theorem}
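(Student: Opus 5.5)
The plan is to work entirely in the orthonormal basis of disk PSWFs supplied by Lemma \ref{lemma: SL}(a) and to reduce the estimate to Parseval's identity. The formula \eqref{eqn Lipschitz stability inverse medium Born} is linear in the data $u_{b,j}$. I will therefore subtract the two representations to obtain
\begin{equation*}
q_1 - q_2 = \sum_{|\alpha_{m,n}(c)| \ge \eta} \frac{1}{\alpha_{m,n}(c)} \left\langle u_{b,1} - u_{b,2}, \psi_{m,n,\ell}(\cdot;c) \right\rangle_{B} \psi_{m,n,\ell}(\cdot;c).
\end{equation*}
Here the sum runs over finitely many indices, since $|\alpha_{m,n}(c)|\to 0$ by Lemma \ref{lemma: SL}(c). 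Every $\alpha_{m,n}(c)$ is nonzero, so each term is well defined.

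Next I will apply orthonormality of $\{\psi_{m,n,\ell}\}$ to compute the norm of the difference:
\begin{equation*}
\|q_1 - q_2\|_{L^2(B)}^2 = \sum_{|\alpha_{m,n}(c)| \ge \eta} \frac{1}{|\alpha_{m,n}(c)|^2} \left| \left\langle u_{b,1} - u_{b,2}, \psi_{m,n,\ell} \right\rangle_B \right|^2 .
\end{equation*}
On the index set every factor satisfies $1/|\alpha_{m,n}(c)|^2 \le \eta^{-2}$. After pulling out this factor, I will enlarge the remaining finite sum to the sum over all indices. Completeness (Parseval, or simply Bessel's inequality) then bounds it by $\|u_{b,1}-u_{b,2}\|_{L^2(B)}^2$. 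Taking square roots gives the constant $1/\eta$.

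Along the way I will check consistency: for $q_j$ in the stated span, \eqref{eigen_R_Fourier} gives $\langle u_{b,j},\psi_{m,n,\ell}\rangle = \alpha_{m,n}\langle q_j,\psi_{m,n,\ell}\rangle$. This uses that $\psi$ is real and that the restricted Fourier kernel $e^{icx\cdot y}$ is symmetric in $x$ and $y$. Consequently \eqref{eqn Lipschitz stability inverse medium Born} does reproduce $q_j$, and the estimate concerns the true contrasts.

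The only delicate point is the bookkeeping between the strict threshold $>\eta$ in the hypothesis and the threshold $\ge\eta$ in the formula. Either way the bound $|\alpha_{m,n}|\ge\eta$ holds on the summation set, so the argument is unaffected. No step is a serious obstacle.
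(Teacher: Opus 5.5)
Your proposal is correct and follows the paper's proof. Like the paper, you subtract the two truncated inversion formulas and use orthonormality of the disk PSWFs to express $\|q_1-q_2\|_{L^2(B)}^2$ through the coefficients of $u_{b,1}-u_{b,2}$; you then bound each weight $1/|\alpha_{m,n}(c)|^2$ by $\eta^{-2}$ and finish with Bessel's inequality. Your extra check that the formula reproduces $q_j$ from its Born data, which rests on the eigenfunction property of the restricted Fourier transform, is what the paper's brief appeal to that property is meant to cover.
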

\begin{proof}
Note that the disk PSWFs are the eigenfunctions of the restricted Fourier transform \eqref{eigen_R_Fourier}, then the stability estimate follows directly from
\begin{eqnarray*}
   &&\|{ q_1} - q_2\|^2_{L^2(B)}  
   = \sum_{|\alpha_{m,n}(c)| \ge \eta}   \frac{1}{ |\alpha_{m,n}(c)|^2 } \Big|\left\langle {   u_{b,1}},  \psi_{m,n,\ell}(\cdot;c) \right\rangle   - \left\langle {   u_{b,2}},  \psi_{m,n,\ell}(\cdot;c) \right\rangle  \Big|^2 \\
   & \le & \sum_{|\alpha_{m,n}(c)| \ge \eta}   \frac{1}{ \eta ^2 } \Big|\left\langle {   u_{b,1}},  \psi_{m,n,\ell}(\cdot;c) \right\rangle   - \left\langle {   u_{b,2}},  \psi_{m,n,\ell}(\cdot;c) \right\rangle  \Big|^2 \le  { \frac{1}{\eta^2}} \|u_{b,1} - u_{b,2}\|^2_{L^2(B)}.
\end{eqnarray*}
This completes the proof.
\end{proof}
According to Theorem \ref{theorem Lipschitz in low-rank uknown}, one can  chose $q \in  \mbox{span}\{ \psi_{m,n,\ell}(\cdot;c): |\alpha_{m,n}(c)| > \eta\}$ and $u_b \in  \mbox{span}\{ \psi_{m,n,\ell}(\cdot;c): {|\alpha_{m,n}(c)|} > \zeta\}$; however since the disk PSWFs  are exactly the eigenfunctions of the Born forward operator \eqref{Born fourier}, one can simply chose $\eta=\zeta$ in Theorem \ref{lemma: Lipschitz stability in low-rankspace}.  

\begin{remark}
We remark that the low-rank structure plays an important role in establishing explicit  conditional a prior estimate in spirit of increasing stability or H\"older-Logarithmic stability in the linear case, cf.  \cite{novikov22} using 1d low-rank structure in connection with Radon transform and \cite{meng23data} using the 2d low-rank structure.
\end{remark}
Here we emphasize the unique feature again that  the leading prolate eigenvalues have numerically the same amplitude, cf. \cite{greengard2024generalized,ZLWZ20,zhou2024exploring} and that the prolate eigenvalues decay to zero very fast. 
Given the wave number $k=c/2$, it is seen that the dimension of the chosen low-rank space $\mbox{span}\{ \psi_{m,n,\ell}(\cdot;c): {|\alpha_{m,n}(c)|} > \eta\}$ will be determined by the wave number $k=c/2$, unlike other heuristic ways of choosing the dimension of low-rank spaces.    We will use the linearized low-rank structure to find an inverse Born approximation $q_0$ via \eqref{eqn Lipschitz stability inverse medium Born} as an initial guess. In this linearized region, there are similar works to obtain $q_0$ using the one dimensional PSWF and Radon transform \cite{novikov22,isaev2022numerical}, a modification of the linear sampling method \cite{audibert2024shape}, and a training-free kernel machine  approach \cite{mengzhang24}.  

Motivated by the above Lipschitz stability estimate,
in this work we further pursue a low-rank-assisted ensemble Kalman filter to update the numerical solution iteratively. In the later sections, we conveniently drop the parameter $c$ in $\psi_{m,n,\ell}(\cdot;c)$ and $\alpha_{m,n,\ell}(c)$ for best readability when there is no confusion.

\section{Ensemble Kalman filter for the  inverse  scattering problem} \label{section: EnKF}
To discuss the  ensemble Kalman filter, we introduce the following notations. Let $X$ be a separable real Hilbert space equipped with the inner product $\langle \cdot, \cdot\rangle_X$ and the norm $\|\cdot\|_X$. A semi-positive definite operator $\mathcal{T}: X \to X$ is such that $\langle \mathcal{T} x, x \rangle_X \ge 0$ for any $x \in X$. Let $\mathcal{C}: X \to X$ be a compact linear operator, we say $\mathcal{C}$ is a trace class operator if
$$
\sum_{k=1}^\infty \Big|\langle \mathcal{C} e_k, e_k \rangle_X \Big| < \infty
$$
where $\{e_k\}_{k=1}^\infty$ is any orthonormal basis of $X$. The definition of the trace class operator is independent of the specific choice of an orthonormal basis, cf. \cite{ringrose1971compact}. 

Let $(\Omega, \mathcal{A}, \mathbb{P})$ be a probability space. A mapping $\mathcal{X}: \Omega \to X$ is called a random element if $\mathcal{X}$ is $(\mathcal{A}, \mathbb{B})$-measurable, where $\mathbb{B} = \sigma(\mathbb{O})$ is the $\sigma$-field of Borel sets with $\mathbb{O}$ denoting the collection of open sets in $X$. In short, we call $\mathcal{X}$ a $X$-valued random element.
The expectation of a $X$-valued random element $\mathcal{Q}$ is defined by 
$$
E[\mathcal{Q}] = \int_\Omega \mathcal{Q}(w) \ind \mathbb{P}(\omega)
$$
in the sense of Bochner integral, cf. \cite{yosida2012functional}. It can be shown that the expectation satisfies
$$
\langle E[\mathcal{Q}], x \rangle_X = E[\langle \mathcal{Q}, x \rangle_X] \quad \mbox{for any} \quad x \in X,
$$
and $E[\mathcal{Q}]$ exists if $E[\|\mathcal{Q}\|] < \infty$.

Suppose $E[\|\mathcal{Q}\|^2] < \infty$, then the covariance operator $\mbox{Cov}(\mathcal{Q}):X \to X$ is a unique self-adjoint, semi-positive definite, trace class operator such that
$$
\langle \mbox{Cov}(\mathcal{Q}) x_1, x_2 \rangle = E[\langle x_1,  \mathcal{Q} - E[\mathcal{Q}]  \rangle \langle \mathcal{Q} - E[\mathcal{Q}], x_2 \rangle] \quad \mbox{for any} \quad x_1,x_2 \in X.
$$
For later purposes we define, for any two elements $q_1 \in X$ and $q_2 \in Y$ where $Y$ is a real Hilbert space, the tensor product $q_1 \otimes q_2: Y \to X$  by
$$
\langle (q_1 \otimes q_2) y, x \rangle_X =  \langle y, q_2\rangle_Y \langle  q_1, x\rangle_X \quad \mbox{for any} \quad x \in X, y \in Y.
$$

The goal of the next section is to, \textit{heuristically}, introduce and interpret the ensemble Kalman filter via  the classical Tikhonov-Phillips regularization, cf. \cite{nakamura2015inverse} and \cite{parzer2022convergence}. 
 \subsection{Tikhonov-Phillips regularization with low-rank approximation}
 To begin with, let us briefly recall the key ideas of Tikhonov-Phillips regularization \cite{phillips1962technique,tikhonov1963solution,tikhonov1963regularization} with adaptation to our inverse problem. Since it is easy to work with real-valued Hilbert space for the ensemble Kalman filter, we introduce $L^2(B;\mathbb{R})$ (resp. $L^2(B)=L^2(B;\mathbb{C})$)  the real-valued (resp. complex-valued) $L^2(B)$ space. Furthermore let   $\mathcal{K}_1: L^2(B;\mathbb{R}) \oplus L^2(B;\mathbb{R}) \to L^2(B;\mathbb{C})$ be given by
 \begin{equation} \label{eqn: def K1}
 \mathcal{K}_1 \left(\begin{matrix}
     q_{R} \\
     q_{I}
 \end{matrix} \right)  =  q_{R} + i q_{I}, \qquad \forall \,\left(\begin{matrix}
     q_{R} \\
     q_{I}
 \end{matrix} \right) \in L^2(B;\mathbb{R}) \oplus L^2(B;\mathbb{R}),
 \end{equation}
 where $X := L^2(B;\mathbb{R}) \oplus L^2(B;\mathbb{R})$ denotes the direct sum and hence a Hilbert space equipped with inner product
 $$
 \left\langle \left(\begin{matrix}
     q^1_{R} \\
     q^1_{I}
 \end{matrix} \right) , \left(\begin{matrix}
     q^2_{R} \\
     q^2_{I}
 \end{matrix} \right)  \right\rangle_X =  \langle q^1_R, q^2_R \rangle_{L^2(B;\mathbb{R})}  +  \langle q^1_I, q^2_I \rangle_{L^2(B;\mathbb{R})}.
 $$
 We also introduce $\mathcal{K}_2: L^2(B;\mathbb{C}) \to L^2(B;\mathbb{R}) \oplus L^2(B;\mathbb{R})$ by
 \begin{equation} \label{eqn: def K2}
 \mathcal{K}_2 (q) =  \left(\begin{matrix}
     \mbox{Re}(q) \\
     \mbox{Im}(q)
 \end{matrix} \right), \qquad \forall q \in L^2(B;\mathbb{C}).
 \end{equation} 
 With this convenient notation, this allows to introduce an equivalent forward map as $\mathcal{K}_2\mathcal{F} \mathcal{K}_1:X \to X$ where $X = L^2(B;\mathbb{R}) \oplus L^2(B;\mathbb{R})$ and
 $$
q=\left(\begin{matrix}
     \mbox{Re}(q) \\
     \mbox{Im}(q)
 \end{matrix} \right) \mapsto \mathcal{K}_2\mathcal{F} \mathcal{K}_1 q = \left(\begin{matrix}
     \mbox{Re}(\mathcal{F}\mathcal{K}_1q) \\
     \mbox{Im}(\mathcal{F}\mathcal{K}_1q)
 \end{matrix} \right).
 $$
The definition of $\mathcal{K}_1$ and $\mathcal{K}_2$ extends similarly to $Y \oplus Y$ when $Y$ is a finite dimensional subspace of $L^2(B;\mathbb{R})$. Hereon we identify $q$ and the processed data as functions in $X$.

 In this work, we assume that we are given a trace class operator $\mathcal{C}$ that is always injective, self-adjoint and semi-positive definite. Following the notation   \cite{nakamura2015inverse}, we define $\langle x_1, x_2\rangle_{X,\mathcal{C}^{-1}}= \langle \mathcal{C}^{-1/2} x_1, \mathcal{C}^{-1/2}x_2\rangle_{X}$ for any $x_1$ and $x_2$ in $\mbox{Range}(\mathcal{C}^{1/2})$, and the corresponding norm by $\|\cdot\|_{X,\mathcal{C}^{-1}}$.
 Given an initial guess $q_0 \in X$, suppose that the forward operator $\mathcal{F}$ has the following approximation 
 $$
 \mathcal{K}_2\mathcal{F} \mathcal{K}_1 q \approx \mathcal{K}_2 \mathcal{F} \mathcal{K}_1 q_0 + \mathcal{L}_0 \Delta q,
 $$
 where $\mathcal{L}_0: X \to X$ is  a linear bounded operator. To solve $\mathcal{K}_2 \mathcal{F}\mathcal{K}_1 q \approx u^\delta$ with noisy data $u^\delta \in X$,   consider the Tikhonov-Phillips regularization to solve the following  optimization problem
 $$
 \Delta \widetilde{q}^{(\infty)} = \argmin_{\Delta q \in \mbox{\footnotesize Range}(\mathcal{C}^{1/2})} \Big( \| \mathcal{L}_0 \Delta q -(u^\delta- \mathcal{K}_2 \mathcal{F} \mathcal{K}_1 q_0) \|^2_{X} + 1/N_{\rm e}\|\Delta q\|^2_{X,\mathcal{C}^{-1}} \Big)
 $$
 where  $N_{\rm e}$ is a positive integer. It is known (cf. \cite{nakamura2015inverse}) that the solution is given by
 \begin{equation}\label{eqn: TP regularization  qinfty}
  \Delta \widetilde{q}^{(\infty)} = \mathcal{C} \mathcal{L}_0^* \Big(\mathcal{L}_0 \mathcal{C} \mathcal{L}_0^* + \mathcal{I}/N_{\rm e}  \Big)^{-1} (u^\delta - \mathcal{K}_2 \mathcal{F} \mathcal{K}_1 q_0),    
 \end{equation}
where $\mathcal{I}$ denotes the identity operator.

One can also work with low-rank approximations \{$\mathcal{\widetilde{C}}^{(M)}: X \to X\}_{M=1}^\infty$ of  $\mathcal{C}$ such that
$$
\lim_{M \to \infty} \|\mathcal{\widetilde{C}}^{(M)} -\mathcal{C}\|_{\mathcal{L}(X,X)} =0,
$$
where each $\mathcal{\widetilde{C}}^{(M)}$ is an injective, self-adjoint, semi-positive definite  trace class operator,
and $\mbox{Range}((\mathcal{\widetilde{C}}^{(M)})^{1/2}) \subset \mbox{Range}(\mathcal{C}^{1/2})$.
The solution to the  Tikhonov-Phillips regularization problem
{\small
 \begin{equation} \label{eqn: TP regularization Delta qM}
   \Delta \widetilde{q}^{(M)} = \argmin_{\Delta q \in \mbox{\footnotesize Range}((\mathcal{\widetilde{C}}^{(M)})^{1/2})} \Big( \| \mathcal{L}_0 \Delta q -(u^\delta- \mathcal{K}_2 \mathcal{F} \mathcal{K}_1 q_0) \|^2_{X} + 1/N_{\rm e}\big\|\Delta q\big\|^2_{X,\mathcal{\widetilde{C}}^{(M),-1}} \Big),   
 \end{equation}
 }
is given by
 \begin{equation} \label{eqn: TP regularization  qM}
 \Delta \widetilde{q}^{(M)} = \mathcal{\widetilde{C}}^{(M)} \mathcal{L}_0^* \Big(\mathcal{L}_0 \mathcal{\widetilde{C}}^{(M)} \mathcal{L}_0^* +   \mathcal{I}/N_{\rm e}\Big)^{-1} \Big(u^\delta - \mathcal{K}_2 \mathcal{F} \mathcal{K}_1 q_0\Big).
\end{equation}
 Let
$$
 \widetilde{q}^{(\infty)} = q_0 + \Delta \widetilde{q}^{(\infty)} \quad \mbox{and} \quad  \widetilde{q}^{(M)} = q_0 + \Delta \widetilde{q}^{(M)}.
$$
 The following lemma, see for instance \cite{nakamura2015inverse} and \cite{parzer2022convergence}, gives the relation between $\Delta \widetilde{q}^{(\infty)}$ and $\Delta \widetilde{q}^{(M)}$.
 
 \begin{lemma}
 The solution $\widetilde{q}^{(M)}$ approaches to $ \widetilde{q}^{(\infty)}$ as $M \to \infty$. Specifically
 $$
 \|  \widetilde{q}^{(M)} -   \widetilde{q}^{(\infty)}\|_X=\|\Delta \widetilde{q}^{(M)} - \Delta \widetilde{q}^{(\infty)}\|_X \le \zeta_0 N_{\rm e}^2 \|\mathcal{\widetilde{C}}^{(M)} - \mathcal{C}\|_{\mathcal{L}(X,X)}
 $$
 where $\zeta_0$ is a positive constant independent of $M$.
 \end{lemma}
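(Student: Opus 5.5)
The plan is to compare the two explicit formulas \eqref{eqn: TP regularization  qinfty} and \eqref{eqn: TP regularization  qM} directly. Write $d := u^\delta - \mathcal{K}_2 \mathcal{F} \mathcal{K}_1 q_0$, and abbreviate $\mathcal{C}_M := \mathcal{\widetilde{C}}^{(M)}$, $\mathcal{G} := \mathcal{L}_0 \mathcal{C} \mathcal{L}_0^* + \mathcal{I}/N_{\rm e}$ and $\mathcal{G}_M := \mathcal{L}_0 \mathcal{C}_M \mathcal{L}_0^* + \mathcal{I}/N_{\rm e}$. Then
\begin{equation*}
\Delta \widetilde{q}^{(M)} - \Delta \widetilde{q}^{(\infty)} = (\mathcal{C}_M - \mathcal{C})\mathcal{L}_0^* \mathcal{G}_M^{-1} d + \mathcal{C}\mathcal{L}_0^* \big(\mathcal{G}_M^{-1} - \mathcal{G}^{-1}\big) d,
\end{equation*}
and I will bound each term by a constant times $N_{\rm e}^2\|\mathcal{C}_M - \mathcal{C}\|_{\mathcal{L}(X,X)}$. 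The identity $\|\widetilde{q}^{(M)} - \widetilde{q}^{(\infty)}\|_X = \|\Delta\widetilde{q}^{(M)} - \Delta\widetilde{q}^{(\infty)}\|_X$ is immediate, since both share the same $q_0$.

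\textbf{Step 1: uniform bounds on the inverses.} Since $\mathcal{C}$ and $\mathcal{C}_M$ are self-adjoint and semi-positive definite, $\mathcal{L}_0 \mathcal{C}_M \mathcal{L}_0^*$ is too. Hence for every $y \in X$,
\begin{equation*}
\langle \mathcal{G}_M y, y\rangle_X \ge \tfrac{1}{N_{\rm e}}\|y\|_X^2 .
\end{equation*}
By Lax--Milgram, or the spectral theorem for bounded self-adjoint operators, $\mathcal{G}_M$ is invertible with $\|\mathcal{G}_M^{-1}\| \le N_{\rm e}$. The same bound holds for $\mathcal{G}^{-1}$. This is the key point: the bound is uniform in $M$, and it is where the powers of $N_{\rm e}$ come from.

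\textbf{Step 2: resolvent identity.} I will use
\begin{equation*}
\mathcal{G}_M^{-1} - \mathcal{G}^{-1} = \mathcal{G}_M^{-1}(\mathcal{G} - \mathcal{G}_M)\mathcal{G}^{-1} = \mathcal{G}_M^{-1}\mathcal{L}_0(\mathcal{C} - \mathcal{C}_M)\mathcal{L}_0^*\mathcal{G}^{-1},
\end{equation*}
which gives $\|\mathcal{G}_M^{-1} - \mathcal{G}^{-1}\| \le N_{\rm e}^2 \|\mathcal{L}_0\|^2 \|\mathcal{C}_M - \mathcal{C}\|$.

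\textbf{Step 3: combine.} The first term is bounded by $N_{\rm e}\|\mathcal{L}_0\|\,\|d\|\,\|\mathcal{C}_M - \mathcal{C}\|$. The second is bounded by $N_{\rm e}^2\|\mathcal{C}\|\,\|\mathcal{L}_0\|^3\|d\|\,\|\mathcal{C}_M - \mathcal{C}\|$. Since $N_{\rm e} \ge 1$ is a positive integer, $N_{\rm e} \le N_{\rm e}^2$, so one may take
\begin{equation*}
\zeta_0 = \|\mathcal{L}_0\|\,\|d\|_X\big(1 + \|\mathcal{C}\|\,\|\mathcal{L}_0\|^2\big).
\end{equation*}
This constant is independent of $M$. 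Convergence as $M \to \infty$ then follows from the assumption $\|\mathcal{C}_M - \mathcal{C}\| \to 0$.

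The only delicate point is Step 1. One must make sure the formula \eqref{eqn: TP regularization  qM} involves a genuinely bounded inverse on all of $X$, despite $\mathcal{C}_M$ being merely semi-definite. Coercivity supplied by the $\mathcal{I}/N_{\rm e}$ term handles this. The remaining steps are routine operator-norm estimates.
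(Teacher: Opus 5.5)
Your proposal is correct and follows the paper's route. The paper also subtracts the two closed-form solutions and uses the bounds $\|(\mathcal{T}+\mathcal{I}/N_{\rm e})^{-1}\|\le N_{\rm e}$ and $\|(\mathcal{T}_1+\mathcal{I}/N_{\rm e})^{-1}-(\mathcal{T}_2+\mathcal{I}/N_{\rm e})^{-1}\|\le N_{\rm e}^2\|\mathcal{T}_1-\mathcal{T}_2\|$ with $\mathcal{T}_1=\mathcal{L}_0\widetilde{\mathcal{C}}^{(M)}\mathcal{L}_0^*$ and $\mathcal{T}_2=\mathcal{L}_0\mathcal{C}\mathcal{L}_0^*$; you spell out the two-term split, the resolvent identity and an explicit $M$-independent $\zeta_0$, which the paper leaves implicit.
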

 \begin{proof}
 For  any semi-positive and self-adjoint  linear operators $\mathcal{T}_1 \in \mathcal{L}(X,X)$ and $\mathcal{T}_2 \in \mathcal{L}(X,X)$, it holds   that
 $$
 \|(\mathcal{T}_1 +   \mathcal{I}/N_{\rm e})^{-1}\|_{\mathcal{L}(X,X)} \le N_{\rm e} \mbox{ and }
 $$
  $$
  \|(\mathcal{T}_1 +  \mathcal{I}/N_{\rm e})^{-1} - (\mathcal{T}_2 +   \mathcal{I}/N_{\rm e})^{-1}\|_{\mathcal{L}(X,X)} \le N_{\rm e}^2 \| \mathcal{T}_1- \mathcal{T}_2\|_{\mathcal{L}(X,X)}.
 $$
Then subtracting \eqref{eqn: TP regularization  qinfty} from \eqref{eqn: TP regularization Delta qM} and using the above two equations, one can directly prove the result. This completes the proof.
 \end{proof}

 \subsection{Ensemble Kalman filter and connection to Tikhonov-Phillips regularization} \label{section: linear general EnKF}
 Now we are ready to introduce
 the  ensemble Kalman filter using its connection to the Tikhonov-Phillips regularization, cf. three-dimensional and four-dimensional variational data assimilation (i.e., 3D-VAR and 4D-VAR) \cite{nakamura2015inverse}. To begin with, given a $X$-valued ensemble 
 $$
 \Delta Q^{(M),0}=[\Delta Q^{(M),0}_1, \Delta Q^{(M),0}_2, \cdots, \Delta Q^{(M),0}_M]
 $$ 
 where each $X$-valued random element $\Delta Q^{(M),0}_m$, $m=1,2,\cdots, M$, is chosen according to mean $0$ and covariance $\mathcal{C}$.  
Then the ensemble Kalman filter iterations are as follows.

\noindent\textbf{Iteration ($n\to n+1$ until $N_{\rm e}$):} Compute
$$
\Delta W_m^{(M),0} = \mathcal{L}_0 \Delta Q_m^{(M),0}, \quad m=1,2,\cdots,M,
$$
and the ensemble mean
$$
E\big[ \Delta Q^{(M),0} \big] = \frac{1}{M} \sum_{m=1}^M \Delta Q_m^{(M),0}, \quad E\big[ \Delta W^{(M),0} \big] = \frac{1}{M} \sum_{m=1}^M \Delta W_m^{(M),0}.
$$
Generate two operators $\mathcal{T}_{qw}^{(M),0}: X \to X$ and $\mathcal{T}_{ww}^{(M),0}: X \to X$ via
\begin{eqnarray*}
    \mathcal{T}_{qw}^{(M),0} &=& \frac{1}{M} \sum_{m=1}^M (\Delta Q_m^{(M),0} - E\big[ \Delta Q^{(M),0} \big]) \otimes (\Delta W_m^{(M),0} - E\big[ \Delta W^{(M),0} \big]),  \\
       \mathcal{T}_{ww}^{(M),0} &=& \frac{1}{M} \sum_{m=1}^M (\Delta W_m^{(M),0} - E\big[ \Delta W^{(M),0} \big]) \otimes (\Delta W_m^{(M),0} - E\big[ \Delta W^{(M),0} \big]).
\end{eqnarray*}
The ensemble Kalman filter updates the ensembles by
\begin{eqnarray*}
 \Delta Q_m^{(M),0} \leftarrow \Delta Q_m^{(M),0} + \mathcal{T}_{qw}^{(M),0} (\mathcal{T}_{ww}^{(M),0} + \mathcal{I})^{-1} (u^\delta - \mathcal{K}_2 \mathcal{F} \mathcal{K}_1 q_0 - \Delta W_m^{(M),0}),
\end{eqnarray*}
for $m = 1,2,\cdots,M$.

\noindent\textbf{Solution at step $N_{\rm e}$:} $q^{(M),0} = q_0 + E\big[\Delta Q^{(M),0}\big]$

The operator $\mathcal{T}_{qw}^{(M),0} (\mathcal{T}_{ww}^{(M),0} + \mathcal{I})^{-1}$ is usually referred to as the Kalman gain operator (or matrix in the finite dimensional setting).
To see the relation between the ensemble Kalman filter and the Tikhonov-Phillips regularization, let
$$
\mathcal{C}^{(M),0} = \frac{1}{M} \sum_{m=1}^M (\Delta Q_m^{(M),0} - E\big[ \Delta Q^{(M),0} \big]) \otimes (\Delta Q_m^{(M),0} - E\big[ \Delta Q^{(M),0} \big]),
$$
then the following holds, cf. \cite[Proposition B.2]{parzer2022convergence}.
\begin{lemma} \label{lemma: TK EnKF equivalence linear}
Let $\mathcal{C}$ be a self-adjoint, semi-positive definite, injective, and trace class operator. Let $\widetilde{q}^{(M)}$ be the Tikhonov-Phillips regularized solution given by   \eqref{eqn: TP regularization  Delta qM} with regularization parameter $1/N_{\rm e}$, and  ${q}^{(M),0}$ be the ensemble Kalman filter solution with $N_{\rm e}$ iterations, then for any $p \in [1,\infty)$, it holds that
$$
\lim_{M\to \infty }E\big[ \|{q}^{(M),0} - \widetilde{q}^{(M)}  \|^p_X  \big]^{1/p} =0,
$$
and
$$
\lim_{M\to \infty }E\big[ \|\mathcal{C}^{(M),0} - \widetilde{\mathcal{C}}^{(M)}  \|^p_{\mathcal{L}(X;X)}  \big]^{1/p} =0.
$$
\end{lemma}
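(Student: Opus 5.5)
The statement is the linear-case equivalence from \cite[Proposition B.2]{parzer2022convergence}. I will prove it by reducing the ensemble Kalman filter iteration to a deterministic map of empirical covariance operators, and then invoking the law of large numbers for sample covariances in a Hilbert space.

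\textbf{Step 1: closed-form dynamics of the iteration.} Since $\mathcal{L}_0$ is linear, the update is a linear transformation of the whole ensemble. In particular the empirical mean $E[\Delta Q^{(M),0}]$ and the centered deviations $\Delta Q_m - E[\Delta Q]$ evolve in closed form. Write $\mathcal{C}_n$ for the empirical covariance after $n$ steps. Then $\mathcal{T}_{qw}=\mathcal{C}_n\mathcal{L}_0^*$ and $\mathcal{T}_{ww}=\mathcal{L}_0\mathcal{C}_n\mathcal{L}_0^*$. The centered deviations are multiplied by $\mathcal{I}-\mathcal{C}_n\mathcal{L}_0^*(\mathcal{L}_0\mathcal{C}_n\mathcal{L}_0^*+\mathcal{I})^{-1}\mathcal{L}_0$. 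This gives the recursion
$$\mathcal{C}_{n+1}^{-1}=\mathcal{C}_n^{-1}+\mathcal{L}_0^*\mathcal{L}_0,$$
interpreted on the range, or written in Woodbury form without inverses. After $N_{\rm e}$ steps,
$$\mathcal{C}_{N_{\rm e}}=\mathcal{C}_0-\mathcal{C}_0\mathcal{L}_0^*(\mathcal{L}_0\mathcal{C}_0\mathcal{L}_0^*+\mathcal{I}/N_{\rm e})^{-1}\mathcal{L}_0\mathcal{C}_0,$$
which is the posterior covariance for the scaled problem. Likewise, the mean after $N_{\rm e}$ steps equals $E[\Delta Q^{(M),0}]_{\rm init}$ plus the Tikhonov formula \eqref{eqn: TP regularization qM} with $\widetilde{\mathcal{C}}^{(M)}$ replaced by $\mathcal{C}_0$. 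I will verify this by induction on $n$ using the standard 3D-VAR/Kalman identity from \cite{nakamura2015inverse}. The key fact is that $N_{\rm e}$ repeated updates with unit noise are equivalent to one update with noise $\mathcal{I}/N_{\rm e}$. This holds because the linear Gaussian posterior factorizes.

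\textbf{Step 2: continuity of the limit maps.} The outputs are continuous, in fact locally Lipschitz, functions of $(\mathcal{C}_0, E[\Delta Q]_{\rm init})$ in operator norm. This uses the resolvent bounds $\|(\mathcal{T}+\mathcal{I}/N_{\rm e})^{-1}\|\le N_{\rm e}$ and the difference estimate from the previous lemma. So it suffices to show two convergences. First, $E[\|\mathcal{C}_0-\widetilde{\mathcal{C}}^{(M)}\|^p]^{1/p}\to0$, where I interpret $\widetilde{\mathcal{C}}^{(M)}$ as the low-rank approximation to which the sample covariance converges, both tending to $\mathcal{C}$. Second, $E[\|E[\Delta Q]_{\rm init}\|^p]^{1/p}\to0$. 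The Lipschitz bounds are polynomial in $\|\mathcal{C}_0\|$, so I will combine them with Hölder's inequality and uniform moment bounds on $\|\mathcal{C}_0\|$ to transfer $L^p$ convergence.

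\textbf{Step 3: Monte Carlo rates in Hilbert space.} For i.i.d.\ mean-zero samples with covariance $\mathcal{C}$ and finite moments (Gaussian, for instance), the sample mean satisfies $E\|\bar{Q}\|^p\lesssim M^{-p/2}(\operatorname{tr}\mathcal{C})^{p/2}$. This follows from the Marcinkiewicz–Zygmund or Rosenthal inequality in Hilbert space, since Hilbert spaces have type 2. The sample covariance converges to $\mathcal{C}$ in the Hilbert–Schmidt norm, and hence in operator norm, at rate $M^{-1/2}$ in $L^p$. To see this, apply the same inequality to the i.i.d.\ Hilbert–Schmidt-valued variables $Q_m\otimes Q_m$, whose moments are controlled by $E\|Q\|^{2p}<\infty$, and correct for the mean subtraction.

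\textbf{Main obstacle.} I expect Step 1 to be the delicate part. The empirical $\mathcal{C}_0$ has rank at most $M-1$ and is not injective, so the inverse-form recursion must be replaced by Woodbury identities that are valid for merely semi-positive operators. I must also check that the composite of $N_{\rm e}$ unit-noise updates exactly reproduces the $\mathcal{I}/N_{\rm e}$ formula on the ensemble span. I will handle this by working throughout with $\mathcal{C}_n+\epsilon\mathcal{I}$ and letting $\epsilon\to0$, or directly with the identities $(\mathcal{I}+\mathcal{C}\mathcal{L}_0^*\mathcal{L}_0)^{-1}\mathcal{C}=\mathcal{C}(\mathcal{I}+\mathcal{L}_0^*\mathcal{L}_0\mathcal{C})^{-1}$, which require no invertibility of $\mathcal{C}$.
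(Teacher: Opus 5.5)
\paragraph{The gap is in Step 1.} You are right that the centered deviations are multiplied by $A_n:=\mathcal{I}-K_n\mathcal{L}_0$, with $K_n:=\mathcal{C}_n\mathcal{L}_0^*(\mathcal{L}_0\mathcal{C}_n\mathcal{L}_0^*+\mathcal{I})^{-1}$. But multiplying every deviation by $A_n$ gives $\mathcal{C}_{n+1}=A_n\mathcal{C}_nA_n^*$. Since $A_n=(\mathcal{I}+\mathcal{C}_n\mathcal{L}_0^*\mathcal{L}_0)^{-1}$ and $A_n\mathcal{C}_n\mathcal{L}_0^*=K_n$ (push-through identity), this is
\[
\mathcal{C}_{n+1}=A_n\mathcal{C}_n-K_nK_n^*,
\qquad\text{i.e.}\qquad
\mathcal{C}_{n+1}^{-1}=\mathcal{C}_n^{-1}+2\mathcal{L}_0^*\mathcal{L}_0+\mathcal{L}_0^*\mathcal{L}_0\mathcal{C}_n\mathcal{L}_0^*\mathcal{L}_0
\]
when inverses exist. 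It is not the Kalman recursion $\mathcal{C}_{n+1}=A_n\mathcal{C}_n$ that you wrote in information form. The ``factorization of the Gaussian posterior'' is a statement about the Kalman covariance update. The paper's unperturbed ensemble update (the same $u^\delta$ for every member) does not realize it: it contracts the spread by the extra term $K_nK_n^*$. Your $\epsilon$-regularization only deals with non-injectivity of the empirical covariance and does not touch this.

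\paragraph{Why this breaks the argument.} Your closed forms for $\mathcal{C}_{N_{\rm e}}$ and for the mean are false once $N_{\rm e}\ge 2$. Steps 2--3 only send $\mathcal{C}_0\to\mathcal{C}$ and the initial sample mean to $0$, so they cannot repair this. Concretely, take $\mathcal{L}_0=\mathcal{I}$, an eigenvector of $\mathcal{C}$ with eigenvalue $c>0$, and let $r$ be the corresponding component of $u^\delta-\mathcal{K}_2\mathcal{F}\mathcal{K}_1q_0$. As $M\to\infty$, two steps of the update give $\frac{c(c+2)}{c^2+3c+1}\,r$. The Tikhonov formula with $\mathcal{I}/2$ gives $\frac{2c}{2c+1}\,r$, which is strictly larger for every $c>0$. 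So for the update exactly as written, the first conclusion fails in general once $N_{\rm e}\ge 2$; only $N_{\rm e}=1$ is safe.

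\paragraph{What would make your route work.} You need an update for which the Kalman recursion actually holds. One option is a square-root (transform) update of the deviations that enforces $\mathcal{C}_{n+1}=A_n\mathcal{C}_n$. Then your Step 1 becomes an exact finite-$M$ identity, and Steps 2--3 finish the proof. The other option is perturbed observations $u^\delta+\eta_m$. There the recursion holds only as $M\to\infty$, so you need an induction over the $N_{\rm e}$ steps with $L^p$ control of the noise cross terms. You also need a finite-dimensional or trace-class noise model, since $\mathcal{N}(0,\mathcal{I})$ does not exist on the infinite-dimensional $X$. The paper gives no argument beyond citing \cite[Proposition B.2]{parzer2022convergence}, so you must match the update rule to the one for which that result is formulated.
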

The above lemma indicates that the ensemble Kalman filter can be seen as a Tikhonov-Phillips regularization with a stochastic low-rank approximation $\mathcal{C}^{(M),0}$ of the trace class operator $\mathcal{C}$. We point out that an adaptive Kalman filter was recently proposed in \cite{parzer2022convergence} for linear inverse problems and a complete treatment for nonlinear inverse problems seems still lacking.

\subsection{Ensemble Kalman filter for the fully nonlinear inverse scattering problem} \label{section: EnKF nonlinear}

For the  inverse scattering problem, the goal is to apply the the ensemble Kalman filter as a derivative-free method, i.e., without   explicitly calculating the linearized operator $L_0$ by the Fr\'echet derivative. To heuristically illustrate this idea, we first assume that the fully nonlinear operator admits the following approximation near the initial guess
 $$
 \mathcal{K}_2 \mathcal{F} \mathcal{K}_1 q \approx \mathcal{K}_2 \mathcal{F} \mathcal{K}_1 q_0 + \mathcal{L}_0 \Delta q.
 $$
 Later on we will eliminate the explicit use of the linearized operator $\mathcal{L}_0$. We first introduce 
 $$
 Q^{(M),0}_m = q_0 + \Delta Q^{(M),0}_m, \quad m = 1,2, \cdots, M,
 $$
 and
 $$
  W^{(M),0}_m = \mathcal{K}_2 \mathcal{F} \mathcal{K}_1 q_0 + \Delta W^{(M),0}_m, \quad m = 1,2, \cdots, M,
 $$
 where $\Delta W^{(M),0}_m = \mathcal{L}_0 \Delta Q^{(M),0}_m$.  One can immediately obtain the following.
 \begin{prop}
     It holds the following relations
     \begin{eqnarray*}
     W^{(M),0}_m - E\big[ W^{(M),0} \big] =    \Delta W^{(M),0}_m -  E\big[ \Delta W^{(M),0} \big],
     \end{eqnarray*}
     and
     \begin{eqnarray*}   Q^{(M),0}_m - E\big[ Q^{(M),0} \big] =    \Delta Q^{(M),0}_m -  E\big[ \Delta Q^{(M),0} \big].
     \end{eqnarray*}
 \end{prop}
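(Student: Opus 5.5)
The plan is a direct computation from the definitions, using only the linearity of the ensemble mean. Here the ensemble mean is the empirical average $E[\,\cdot\,]=\frac{1}{M}\sum_{m=1}^M(\cdot)_m$ introduced in the iteration of Section \ref{section: linear general EnKF}. For the first identity, I note that $W^{(M),0}_m=\mathcal{K}_2\mathcal{F}\mathcal{K}_1 q_0+\Delta W^{(M),0}_m$. The shift $\mathcal{K}_2\mathcal{F}\mathcal{K}_1 q_0\in X$ is a fixed, deterministic element that does not depend on $m$.

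The first step is to compute the empirical mean:
$$
E\big[W^{(M),0}\big]=\frac{1}{M}\sum_{m=1}^M\Big(\mathcal{K}_2\mathcal{F}\mathcal{K}_1 q_0+\Delta W^{(M),0}_m\Big)=\mathcal{K}_2\mathcal{F}\mathcal{K}_1 q_0+E\big[\Delta W^{(M),0}\big].
$$
Subtracting this from $W^{(M),0}_m$ cancels the constant shift, which yields the first relation. The second relation follows by the identical argument with $q_0$ in place of $\mathcal{K}_2\mathcal{F}\mathcal{K}_1 q_0$: from $Q^{(M),0}_m=q_0+\Delta Q^{(M),0}_m$ we get $E[Q^{(M),0}]=q_0+E[\Delta Q^{(M),0}]$, and subtracting removes $q_0$.

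If $E$ is instead read as the Bochner expectation, the argument is unchanged. In that case I would invoke linearity of the Bochner integral together with $E[x]=x$ for a deterministic $x\in X$ (since $\mathbb{P}(\Omega)=1$). Integrability is not an issue because the constant shift has finite norm. There is no genuine obstacle in either reading. The only point requiring care is that the shifts $q_0$ and $\mathcal{K}_2\mathcal{F}\mathcal{K}_1 q_0$ are the same for every ensemble member, which is exactly what makes them cancel.

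The consequence I would record afterwards is that the operators $\mathcal{T}^{(M),0}_{qw}$ and $\mathcal{T}^{(M),0}_{ww}$ can be built from $Q_m$ and $W_m=\mathcal{K}_2\mathcal{F}\mathcal{K}_1Q_m$ (approximately) without using $\mathcal{L}_0$ explicitly.
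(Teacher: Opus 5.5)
Your proposal is correct and takes the same approach as the paper, which simply states that the relations follow immediately. The ensemble mean is linear, and the shifts $q_0$ and $\mathcal{K}_2\mathcal{F}\mathcal{K}_1 q_0$ are the same for every member $m$, so they cancel upon subtraction.
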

 Therefore we can rewrite the ensemble Kalman filter with $\Delta Q^{(M),0}$ update by the   ensemble Kalman filter with $Q^{(M),0}$ update. In particular, given a $X$-valued ensemble $Q^{(M),0}=[ Q^{(M),0}_1, Q^{(M),0}_2, \cdots,  Q^{(M),0}_M]$ where each $X$-valued random element $Q^{(M)}_m$, $m=1,2,\cdots, M$, is  chosen according to mean $q_0$ and covariance $\mathcal{C}$, the ensemble Kalman filter in Section \ref{section: linear general EnKF} can be rewritten as follows.

\noindent\textbf{Iteration ($n\to n+1$ until $N_{\rm e}$):} Compute
$$
 W_m^{(M),0} = \mathcal{K}_2 \mathcal{F} \mathcal{K}_1 q_0 + \mathcal{L}_0 (Q_m^{(M),0} - q_0), \quad m=1,2,\cdots,M.
$$
Generate two operators $\mathcal{T}_{qw}^{(M),0}: X \to X$ and $\mathcal{T}_{ww}^{(M),0}: X \to X$ via
\begin{eqnarray*}
    \mathcal{T}_{qw}^{(M),0} &=& \frac{1}{M} \sum_{m=1}^M (Q_m^{(M),0} - E\big[ Q^{(M),0} \big]) \otimes ( W_m^{(M),0} - E\big[  W^{(M),0} \big]),  \\
       \mathcal{T}_{ww}^{(M),0} &=& \frac{1}{M} \sum_{m=1}^M (W_m^{(M),0} - E\big[  W^{(M),0} \big]) \otimes ( W_m^{(M),0} - E\big[ W^{(M),0} \big]).
\end{eqnarray*}
Then the ensemble Kalman filter updates the ensembles by
\begin{eqnarray*}
Q_m^{(M),0} \leftarrow Q_m^{(M),0} + \mathcal{T}_{qw}^{(M),0} (\mathcal{T}_{ww}^{(M),0} + \mathcal{I})^{-1} (u^\delta -  W_m^{(M),0}), \quad m = 1,2,\cdots,M,
\end{eqnarray*}

\noindent\textbf{Solution at step $N_{\rm e}$:} $q^{(M),0} =  E\big[Q^{(M),0}\big]$

The superscript $0$ represents that the algorithm still explicitly uses the linearized operator $\mathcal{L}_0$. To implicitly use the idea of linearization, note that
$$
 W_m^{(M),0} = \mathcal{K}_2 \mathcal{F} \mathcal{K}_1 q_0 + \mathcal{L}_0 (Q_m^{(M),0} - q_0) \approx \mathcal{K}_2 \mathcal{F} \mathcal{K}_1 Q_m^{(M),0}, \quad m=1,2,\cdots,M,
$$
one can then replace each $W_m^{(M),0}$ by $W_m^{(M)} = \mathcal{K}_2 \mathcal{F} \mathcal{K}_1 Q_m^{(M)}$ and drop all the superscript $0$ to arrive at Algorithm \ref{algo: EnKF nonlinear}, the ensemble Kalman filter for solving the inverse scattering problem. In Algorithm \ref{algo: EnKF nonlinear}, recall that $\mathcal{K}_1$ and $\mathcal{K}_2$ are defined via \eqref{eqn: def K1} and \eqref{eqn: def K2}, respectively, and $\mathcal{F}: L^2(B) \to L^2(B)$ is the reformulated forward operator (cf. Section \ref{section: model reformulation}).
\begin{algorithm}
	\caption{Ensemble Kalman filter for  inverse medium scattering} 
	\begin{algorithmic}[1]
        \Require{Noisy data $u^\delta \in X$, initial guess $q_0 \in X$, iteration number $N_{\rm e}$, trace class operator $\mathcal{C}$, ensemble of $X$-valued random elements $Q^{(M)}=[Q^{(M)}_1, Q^{(M)}_2, \cdots, Q^{(M)}_M]$} with mean $q_0$ and covariance $\mathcal{C}$
         \Ensure{Solution $q^{(M)}$}
         \For{$n=1,2,\cdots,N_{\rm e}$}
         \State  
    $W_m^{(M)} = \mathcal{K}_2 \mathcal{F} \mathcal{K}_1 Q_m^{(M)}, \quad m=1,2,\cdots,M$ 
    \State $w^{(M)} = E\big[   W^{(M)} \big]$, $ q^{(M)} = E\big[ Q^{(M)} \big]$
         \State $\mathcal{T}_{qw}^{(M)} = \frac{1}{M} \sum_{m=1}^M (Q_m^{(M)} - q^{(M)}) \otimes ( W_m^{(M)} -  w^{(M)}),$
         \State $\mathcal{T}_{ww}^{(M)} = \frac{1}{M} \sum_{m=1}^M (W_m^{(M)} - w^{(M)}) \otimes ( W_m^{(M)} -  w^{(M)})$
         \State $Q_m^{(M)} += \mathcal{T}_{qw}^{(M)} (\mathcal{T}_{ww}^{(M)} + \mathcal{I})^{-1} (u^\delta -  W_m^{(M)}), \quad m = 1,2,\cdots,M$
         \EndFor  
         \State $q^{(M)} = E\big[Q^{(M)}\big]$
	\end{algorithmic} 
 \label{algo: EnKF nonlinear}
\end{algorithm}

\subsection{Customized Sobolev space}
The covariance operator $\mathcal{C}: X \to X$ represents a priori knowledge about the unknown contrast. Motivated by the application of disk PSWFs to the inverse scattering problem \cite{meng23data}, we look for unknown contrasts in a customized Sobolev space and will introduce an appropriate   covariance operator later on.
With the help of Sturm-Liouville theory, one can define the following customized Sobolev space
\begin{equation*}
    H_{c}^{\tilde{s}}(B):=\{ u \in L^2(B): \sum^{l\in\mathbb{I}(m)}_{m,n\in \mathbb{N}}\chi^{\tilde{s}}_{m,n}|\langle u, \psi_{m,n,\ell} \rangle|^2 < \infty  \}
\end{equation*}
for any integer ${\tilde{s}}=1,2,\cdots$,
where $\langle u, \psi_{m,n,\ell} \rangle = \int_B u(x)  \psi_{m,n,\ell}(x) {\rm d}x$. By interpolation theory, the above Sobolev space $H_{c}^{\tilde{s}}(B)$ is well defined for any real number ${\tilde{s}}\ge0$. 

\begin{remark} \label{Lemma chimn asymptotic}
We remark that the following property of the Sturm-Liouville eigenvalue $\chi_{m,n}$ \cite{ZLWZ20} holds: for any $c>0$,  
\begin{eqnarray*}
(m+2n)(m+2n+2) < \chi_{m,n}(c) <  (m+2n)(m+2n+2)+c^2.
\end{eqnarray*}
We also remark that the customized Sobolev space $H_c^{\tilde{s}}(B)$ is practically useful since it is closely related to the standard Sobolev space $H^{\tilde{s}}(B)$ and more details can be found in \cite{meng23data} and the references therein. 
\end{remark}
\subsection{Low-rank-assisted formulation}
The above customized  Sobolev space allows to introduce an appropriate covariance operator $\mathcal{C}: X \to X$ in the ensemble Kalman filter. 
To elaborate the idea,
define the following operator $\mathcal{C}_0: L^2(B;\mathbb{R}) \to L^2(B;\mathbb{R})$ by
$$
\mathcal{C}_0 \psi_{m,n,\ell} = \vartheta \chi_{m,n}^{-s} \psi_{m,n,\ell}, \quad m,n \in \mathbb{N}, \ell \in \mathbb{I}(m),
$$
where $s>0$ is a positive real number such that $\mathcal{C}_0$ is of trace class, i.e.,
$$
\sum_{m,n\in \mathbb{N}}^{\ell \in \mathbb{I}(m)}\chi_{m,n}^{-s} < \infty,
$$
and $\vartheta>0$ is a positive scaling that heuristically represents the a priori knowledge about the amplitude level of the unknown. 
It is directly seen that the above series   converges for $s>1$ due to the   asymptotic property (cf. Remark \ref{Lemma chimn asymptotic}) where
\begin{eqnarray*}
(m+2n)(m+2n+2) < \chi_{m,n}(c) <  (m+2n)(m+2n+2)+c^2.
\end{eqnarray*}
It also follows that $\mathcal{C}_0 =\vartheta \mathcal{D}^{-s}$. Now we introduce the covariance operator $\mathcal{C}: X \to X$ via 
$$
\mathcal{C} = \left(\begin{matrix}
\mathcal{C}_0 & 0 \\
0 & \mathcal{C}_0 
\end{matrix}\right)
$$

\begin{algorithm}
	\caption{Low-rank-assisted ensemble Kalman filter for inverse medium scattering} 
	\begin{algorithmic}[1]  
        \Require{Noisy processed data $u^\delta$, spectral cut off parameter $\eta$,   maximum iteration number $N_{\rm e}$}
         \Ensure{Solution $\sum_{(m,n,\ell) \in J_\eta} q^{(M)}_{m,n,\ell}   \psi_{m,n,\ell}$}
         \State Precompute the disk PSWFs system $\{\psi_{m,n,\ell}, \alpha_{m,n}\}$   and set $J_\eta = \{(m,n,\ell)|: |\alpha_{m,n}|>\eta \}$
         \State Compute $u^{\delta,\mbox{\footnotesize proj}} = \mathcal{P}_\eta u^\delta$ where   $  u^{\delta,{\rm proj}}_{m,n,\ell} =\langle u^\delta, \psi_{m,n,\ell}\rangle$ for $(m,n,\ell) \in J_\eta$ and the low-rank regularized initial guess  $\{q_{(0),m,n,\ell}\}_{(m,n,\ell) \in J_\eta }$ where $q_{(0),m,n,\ell} = u^{\delta,{\rm proj}}_{m,n,\ell}/\alpha_{m,n}$; identify $q^{(M)}$   with $\{q_{(0),m,n,\ell}\}_{(m,n,\ell) \in J_\eta}$ and   $u^{\delta,{\rm proj}}$   with $\{u^{\delta,{\rm proj}}_{m,n,\ell}\}_{(m,n,\ell) \in J_\eta}$
         \State Generate initial ensemble $Q^{(M)} = [Q^{(M)}_1, Q^{(M)}_2, \cdots, Q^{(M)}_M]$ with each $Q_j^{(M)}$ identified as a real-valued vector of length $2 (\mbox{dim}J_\eta)$ by stacking the real and imaginary parts of $\big\{q^{[j]}_{m,n,\ell}\big\}_{(m,n,\ell) \in J_\eta}$
where $q^{[j]}_{m,n,\ell} \sim q_{(0),m,n,\ell} +  \sqrt{\vartheta\chi_{m,n}^{-s}} (\xi^{[j]}_1,\xi^{[j]}_2)^T$
where $\xi^{[j]}_1 \sim \mathcal{N}(0,1)$,  $\xi^{[j]}_2 \sim \mathcal{N}(0,1)$, and $\vartheta>0$.
         \For{$j=1,2,\cdots,N_{\rm e}$}
         \State $W^{(M)} = [W^{(M)}_1, W^{(M)}_2, \cdots, W^{(M)}_M]$ where 
    $W_m^{(M)} = \mathcal{P}_\eta \mathcal{K}_2 \mathcal{F} \mathcal{K}_1 Q_m^{(M)}, \, m=1,2,\cdots,M$ 
    \State $w^{(M)} = E\big[   W^{(M)} \big]$
         \State $\mathcal{T}_{qw}^{(M)} = \frac{1}{M} \sum_{m=1}^M \mbox{kron}\big( Q_m^{(M)} - q^{(M)},  { W_m^{(M)} -  w^{(M)}} \big)$ \Comment{{Using Matlab kron}}
         \State $\mathcal{T}_{ww}^{(M)} = \frac{1}{M} \sum_{m=1}^M \mbox{kron}\big(W_m^{(M)} - w^{(M)},  { W_m^{(M)} -  w^{(M)}} \big)$\Comment{{ Using Matlab kron}}
         \State 
         $Q_m^{(M)} +=  \mathcal{T}_{qw}^{(M)} (\mathcal{T}_{ww}^{(M)} + \gamma_j \mathcal{I})^{-1} (u^{\delta,{\rm proj}} -  W_m^{(M)}), \quad m = 1,2,\cdots,M$; $\gamma_j$ is a regularization parameter 
         \State $ q^{(M)} = E\big[  Q^{(M)} \big]$
         \If{stopping criteria is satisfied}
           stop
         \EndIf
         \EndFor  
         \State Return $q^{(M)}$
	\end{algorithmic} 
 \label{algo: low-rank EnKF nonlinear}
\end{algorithm}  
Given the perturbed data $u^\delta$, Theorem \ref{lemma: Lipschitz stability in low-rankspace} suggests the inverse Born approximation 
$$
q_0 \approx \sum_{m,n,\ell \in J_\eta} q_{(0),m,n,\ell}   \psi_{m,n,\ell}
$$
where 
$q_{(0),m,n,\ell} = \langle u^{\delta}, \psi_{m,n,\ell} \rangle/\alpha_{m,n}$
and $J_\eta = \{(m,n,\ell)|: |\alpha_{m,n}|>\eta \}$ determines the dimension of the low-rank space using the cut off $\eta>0$.  Such a low-rank space is expected to mitigate the ill-posedness of the inverse scattering problem.
With the trace class operator $\mathcal{C}$, we can generate the initial ensemble $Q^{(M)} = [Q^{(M)}_1, Q^{(M)}_2, \cdots, Q^{(M)}_M]$ by the Karhunen–Lo\`eve expansion (cf. \cite{karhunen1946spektraltheorie,loeve1945fonctions} and \cite{ghanem2003stochastic})
$$
Q^{(M)}_j \sim  \sum_{m,n,\ell \in J_\eta} q^{[j]}_{m,n,\ell}   \psi_{m,n,\ell} \quad \mbox{for each} \quad j=1,2,\cdots,M,
$$
where 
\begin{equation} \label{eqn: q^j_mnl KL expansion 1}
  q^{[j]}_{m,n,\ell} \sim q_{(0),m,n,\ell} +   \sqrt{\vartheta\chi_{m,n}^{-s}}  (\xi^{[j]}_1,\xi^{[j]}_2)^T ,\quad j=1,2,\cdots, M,
\end{equation}
where $\xi^{[j]}_1 \sim \mathcal{N}(0,1)$ and $\xi^{[j]}_2 \sim \mathcal{N}(0,1)$.
Similarly, the forward map leads to processed data $W_j^{(M)} = \mathcal{K}_2 \mathcal{F} \mathcal{K}_1 Q_j^{(M)}$ and we represent $W_j^{(M)}$  by its low-rank approximation
$$
W^{(M)}_j \approx \mathcal{P}_\eta  \mathcal{K}_2 \mathcal{F} \mathcal{K}_1 Q_j^{(M)} =  \sum_{m,n,\ell \in J_\eta} w^{[j]}_{m,n,\ell}   \psi_{m,n,\ell} \quad \mbox{for each} \quad j=1,2,\cdots,M.
$$
Now we arrive at the  low-rank-assisted formulation of the ensemble Kalman filter in Algorithm \ref{algo: low-rank EnKF nonlinear}, where we conveniently identify $Q_j^{(M)}$ as a real-valued vector of length $2 (\mbox{dim}J_\eta)$ by stacking the real and imaginary parts of $\big\{q^{[j]}_{m,n,\ell}\big\}_{m,n,\ell \in J_\eta}$ for each $j=1,2,\cdots,M$; same notational convenience applies to $W_j^{(M)}$, $j=1,2,\cdots,M$.

In Algorithm \ref{algo: low-rank EnKF nonlinear}, the parameter $\vartheta$ heuristically represents the a priori knowledge about the amplitude level of the unknown, and is fixed as $1$ in the numerical studies.
We heuristically introduce an additional regularization parameter $\gamma_j$ in each iteration. In particular, one heuristic choice is $\gamma_j = \max\{0.01, \delta\} |\lambda_j|$  where $\delta$ is the noise level and $\lambda_j$ is the largest eigenvalue (in amplitude) of $\mathcal{T}_{ww}^{(M)}$ in the $j$-th iteration; another conservative choice can be $\gamma_j = 0.9 |\lambda_j|$, assuming no a priori information about the noise level; another choice is $\gamma_j=1$ (as suggested in the first Algorithm \ref{algo: EnKF nonlinear}) which seems cost much more iterations, and all   these three choices will be tested numerically.
It is known that an appropriate stopping criteria is critical to ensure a good approximation in iterative algorithms such as the Gauss-Newton and Levenberg-Marquardt algorithms; however a complete analysis of the stopping criteria for the ensemble Kalman filter is difficult \cite{iglesias2013ensemble} and is beyond the scope of this work; heuristically, we point out that the ensemble Kalman filter can be chosen (cf. \cite{iglesias2013ensemble}) to terminate   for the first $m$ such that the residual in Frobenius norm $\big\|u^{\delta,{\rm proj}} - \mathcal{P}_\eta \mathcal{K}_2\mathcal{F}\mathcal{K}_1 (E\big[Q^{(M)}\big])\big\|_{\rm F} \le  c_0\widetilde{\delta} $ for some $c_0>1$  where $\widetilde{\delta}$ represents the error due to modeling and noise. In a similar fashion, the ensemble Kalman filter can  stop at the first $m$ when the relative residual $\big\|u^{\delta,{\rm proj}} - \mathcal{P}_\eta \mathcal{K}_2\mathcal{F} \mathcal{K}_1 (E\big[Q^{(M)}\big])\big\|_{\rm F}/\big\|u^{\delta,{\rm proj}} \|_{\rm F} < c_0\delta $ for some $c_0>1$ where $\delta$ represents the  noise level in the data; it is also possible to terminate when the relative residual   starts to stagnate. These stopping criteria will be discussed and illustrated in the numerical examples in the next section. 

 \begin{figure}[htbp]
\includegraphics[trim=0cm 15cm 0cm 0cm, clip=true,width=1\textwidth]{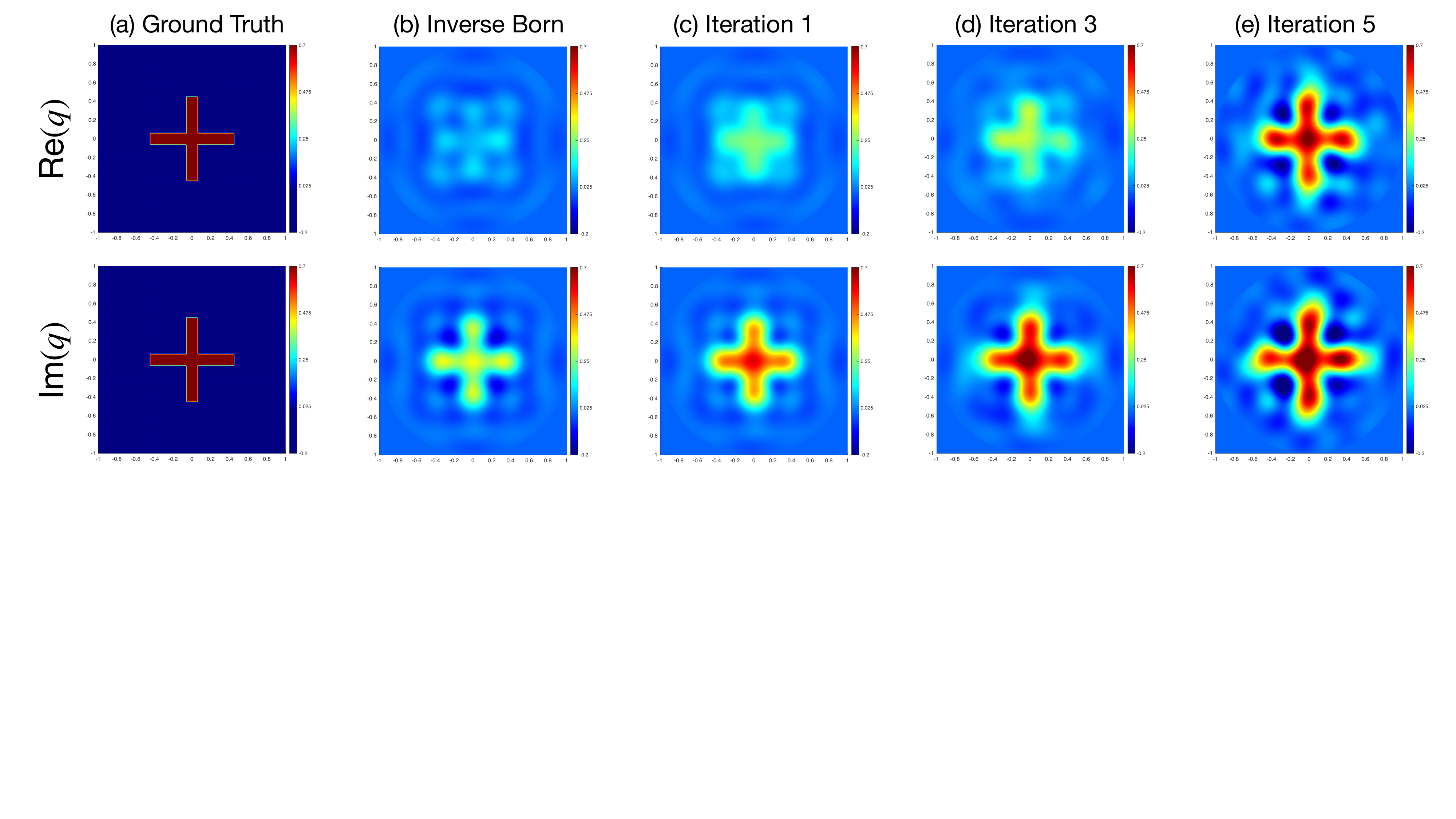}
\caption{Reconstruction of the strong scatterer ``Cross 2D''. Figure (a) plots the ground truth scatterer. Figure (b)(c)(d)(e) plot the reconstruction of the real part $\mbox{Re}(q)$ in the first row and the imaginary part $\mbox{Im}(q)$ in the second row at different iterations.
} 
\label{figure cross2D}
\end{figure}

\section{Numerical experiments} \label{section: numerics}
 
In this section, we conduct numerical experiments to demonstrate the feasibility of the proposed low-rank-assisted ensemble Kalman filter. For the full model, we use IPscatt \cite{burgel2019algorithm} to generate the exact far-field data  $\{u^{\infty}(\hat{x}_i;\hat{\theta}_j;k)\}_{i,j=1}^{N}$ at equispaced incident and observation directions for $N=64$.   The noisy far field data $\{u^{\infty,\delta}(\hat{x}_i;\hat{\theta}_j;k)\}_{i=1,j=1}^{N}$ are generated by  adding   random uniformly distributed noise point-wise where the relative noise level is $\delta = 3\%$.  The noisy far field data $\{u^{\infty,\delta}(\hat{x}_i;\hat{\theta}_j;k)\}_{i=1,j=1}^{N}$ are futher processed according to Proposition \ref{prop: define u(p)} to obtain the processed data within the disk $B$ (cf. \cite{zhou2024exploring}). We also remark that the numerical discretization  in the forward solver for the far-field data generation is different from the one for the ensemble Kalman filter. 
If the  degree of nonlinearity $\delta_1$ and the noise level $\delta$ are approximately known, one can compute the initial guess $q_0$ as in Theorem \ref{lemma: Lipschitz stability in low-rankspace}  using a  cut off parameter $\eta= \delta_2 |\alpha_{0,0}(c)|$ where $\delta_2 \in (0,1)$ represents the total effect of the degree of nonlinearity $\delta_1$ and the  noise level $\delta$. In practice the a prior information on the degree of nonlinearity and noise level may not be  known, in this work the spectral cut off is chosen  conservatively as $\eta=0.9 |\alpha_{0,0}(c)|$ to facilitate robustness (cf. \cite{zhou2024exploring}).

The covariance operator is motivated by $\mathcal{D}^{-s}$   where we  generate the initial ensemble $Q^{(M)} = [Q^{(M)}_1, Q^{(M)}_2, \cdots, Q^{(M)}_M]$ by the Karhunen–Lo\`eve expansion  
$$
Q^{(M)}_j \sim  \sum_{m,n,\ell \in J_\eta} q^{[j]}_{m,n,\ell}   \psi_{m,n,\ell} \quad \mbox{for each} \quad j=1,2,\cdots,M,
$$
where $q^{[j]}_{m,n,\ell} \sim q_{(0),m,n,\ell} +    (m+2n+2)^{-s}  (\xi^{[j]}_1,\xi^{[j]}_2)^T$
with $s=2.5$, $\xi^{[j]}_1 \sim \mathcal{N}(0,1)$ and $\xi^{[j]}_2 \sim \mathcal{N}(0,1)$, for each $j=1,2,\cdots, M$; to test other choices of $s$, we present the case when $s=1.5$ in Section \ref{section numeric regu parameter and stopping criteria}. Here we have replaced $\chi_{m,n}$ in equation \eqref{eqn: q^j_mnl KL expansion 1} by $(m+2n+2)^2$ thanks to the asymptotic in Remark \ref{Lemma chimn asymptotic}. 

\subsection{Improving the inverse Born solution via iteration} \label{section numeric improving cross2d}

We first demonstrate how the inverse Born solution can be improved in several steps using the low-rank-assisted ensemble Kalman filter. The wave number is $k=10$.  The ground truth of the strong scatterer is the complex-valued ``Cross2D'' and we plot its real and imaginary parts in Figure \ref{figure cross2D}(a). The inverse Born solution in Figure \ref{figure cross2D}(b) gives poor approximations, and is improved in several iterations by the low-rank-assisted EnKF  with ensemble size   $100$, cf. Figure \ref{figure cross2D}(c)(d)(e). Specifically, it is observed that the amplitude is largely corrected in iteration $3$, and more details are added in iterations $5$. We omit the plots in later  iterations since the resolution remains on the same level. Here  the regularization parameter in each iteration is  $\gamma_j = \max\{0.01, \delta\} |\lambda_j|$  where $\delta$ is the noise level and $\lambda_j$ is the largest eigenvalue in amplitude of $\mathcal{T}_{ww}^{(M)}$ in the $j$-th iteration. The relative residual history is plotted in Figure \ref{figure cross2D residual}(a) with marker circle. It is observed that the relative residual  $\big\|u^{\delta,{\rm proj}} - \mathcal{P}_\eta \mathcal{K}_2 \mathcal{F} \mathcal{K}_1 (E\big[Q^{(M)}\big])\big\|_{\rm F}/\big\|u^{\delta,{\rm proj}} \|_{\rm F}$ decays fast in the first few iterations and then suddenly stagnates, therefore it is reasonable for the ensemble Kalman filter to terminate at iteration step $5$; moreover,
 due to the fast decay of the relative residuals, it is also reasonable to terminate the ensemble Kalman filter  when the relative residual  $\big\|u^{\delta,{\rm proj}} - \mathcal{P}_\eta \mathcal{K}_2 \mathcal{F} \mathcal{K}_1 (E\big[Q^{(M)}\big])\big\|_{\rm F}/\big\|u^{\delta,{\rm proj}} \|_{\rm F} < c_0\delta $ for some constant $c_0$.  In the following, we will illustrate the stopping criteria in more numerical examples.

\subsection{Test on different choices of regularization parameters} \label{section numeric regu parameter and stopping criteria}
  \begin{figure}[htbp]
\includegraphics[trim=0cm 15cm 0cm 0cm, clip=true,width=1\textwidth]{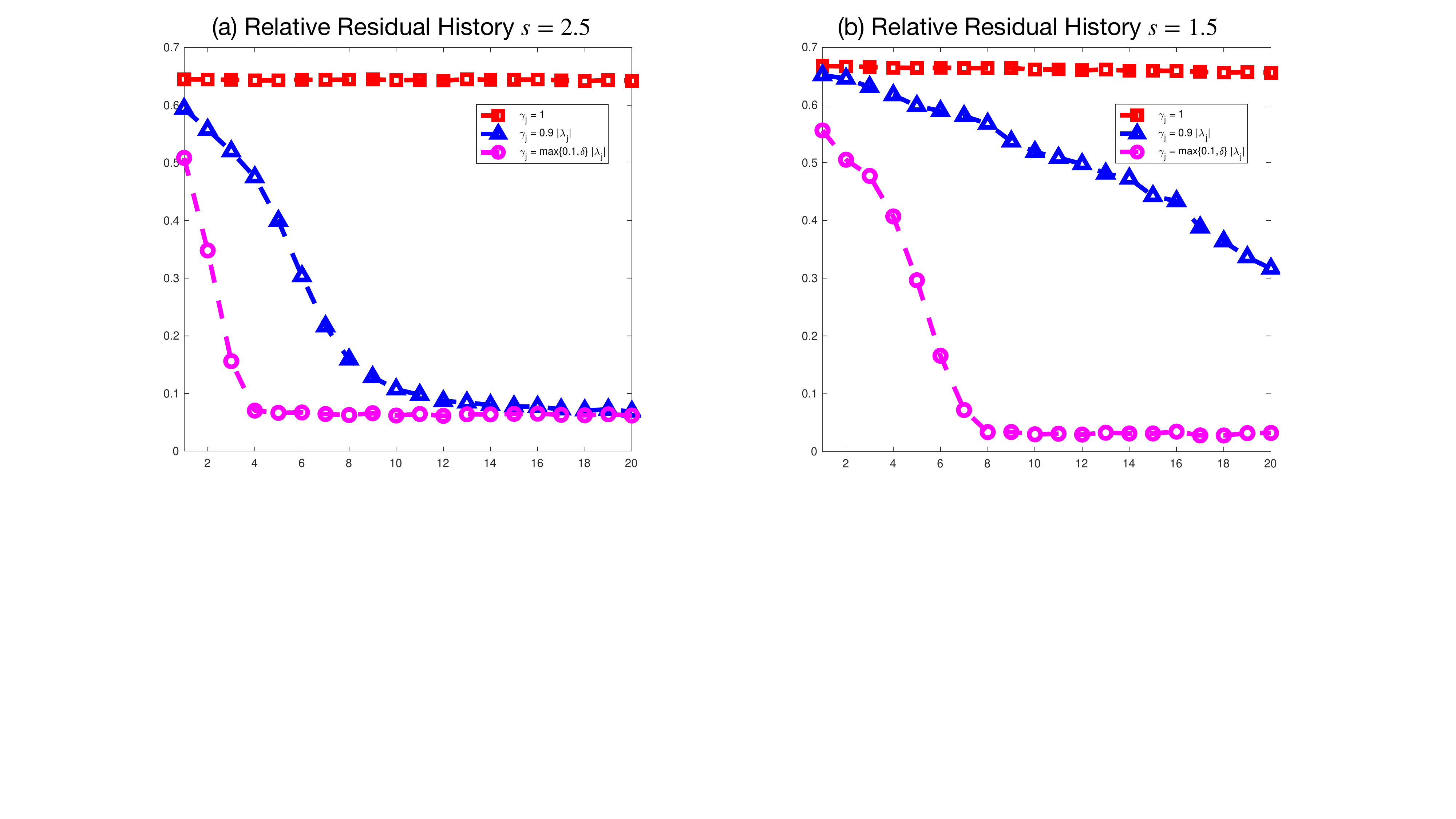}
\caption{Relative residual history of the ensemble Kalman filter for the strong scatterer ``Cross 2D''. Figure (a) and (b) plots the history for $s=2.5$ and $s=1.5$, respectively.
} 
\label{figure cross2D residual}
\end{figure}
To test the convergence for different choices of the regularization parameters $\gamma_j$, we plot the history of the relative residual with $N_{\rm e}=20$ in Figure \ref{figure cross2D residual}(a).
We first plot the relative residual history    when the regularization parameter is fixed as $\gamma_j=1$  in Figure \ref{figure cross2D residual}(a) with marker square and it is observed that the relative residual decreases very slowly. In the case when $\gamma_j= 0.9 |\lambda_j|$  where $|\lambda_j|$ is the largest eigenvalue in amplitude of $\mathcal{T}_{ww}^{(M)}$  at each iteration step $j$, it is observed that the convergence is much faster (cf. Figure \ref{figure cross2D residual}(a) with marker triangle). Finally we test the case when $\gamma_j= \max\{0.01, \delta\} |\lambda_j|$ where $\delta=0.03$ represents the noise level, the convergence (cf. Figure \ref{figure cross2D residual}(a) with marker circle) is slightly faster than the case when $\gamma_j= 0.9 |\lambda_j|$.   To further test other choices of $s$ in Algorithm \ref{algo: low-rank EnKF nonlinear} (i.e. Line 3), we plot in Figure \ref{figure cross2D residual}(b) the relative residual history when $s=1.5$ and similar property can be again observed. In later examples, we  fix $s=2.5$ and $\gamma_j = \max\{0.01, \delta\} |\lambda_j|$.

\begin{figure}[h]
\includegraphics[trim=0cm 24cm 0cm 0cm, clip=true,width=1\textwidth]{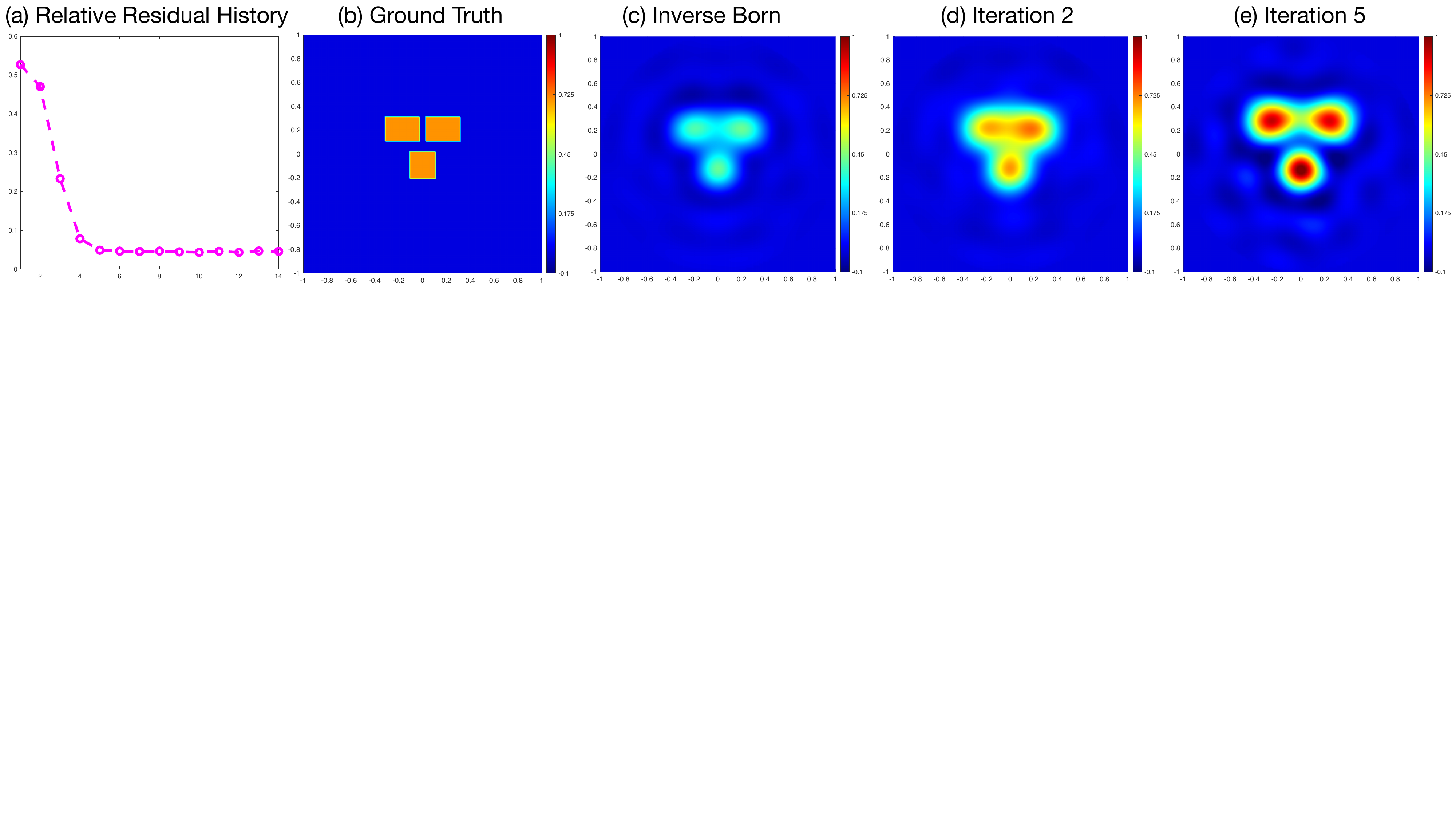}
\caption{Reconstruction of three rectangles at wave number $k=10$. Figure(a) plots the relative residual history. To the right, Figure(b) plots the real-valued  ground truth  contrast, and Figure(c)(d)(e) plots the reconstructions at different iteration steps.
} 
\label{figure ThreeSquares}
\end{figure}

\subsection{Nearby objects and different wave numbers} \label{section: numerical nearby}

\begin{figure}[h]
\includegraphics[trim=0cm 24cm 0cm 0cm, clip=true,width=1\textwidth]{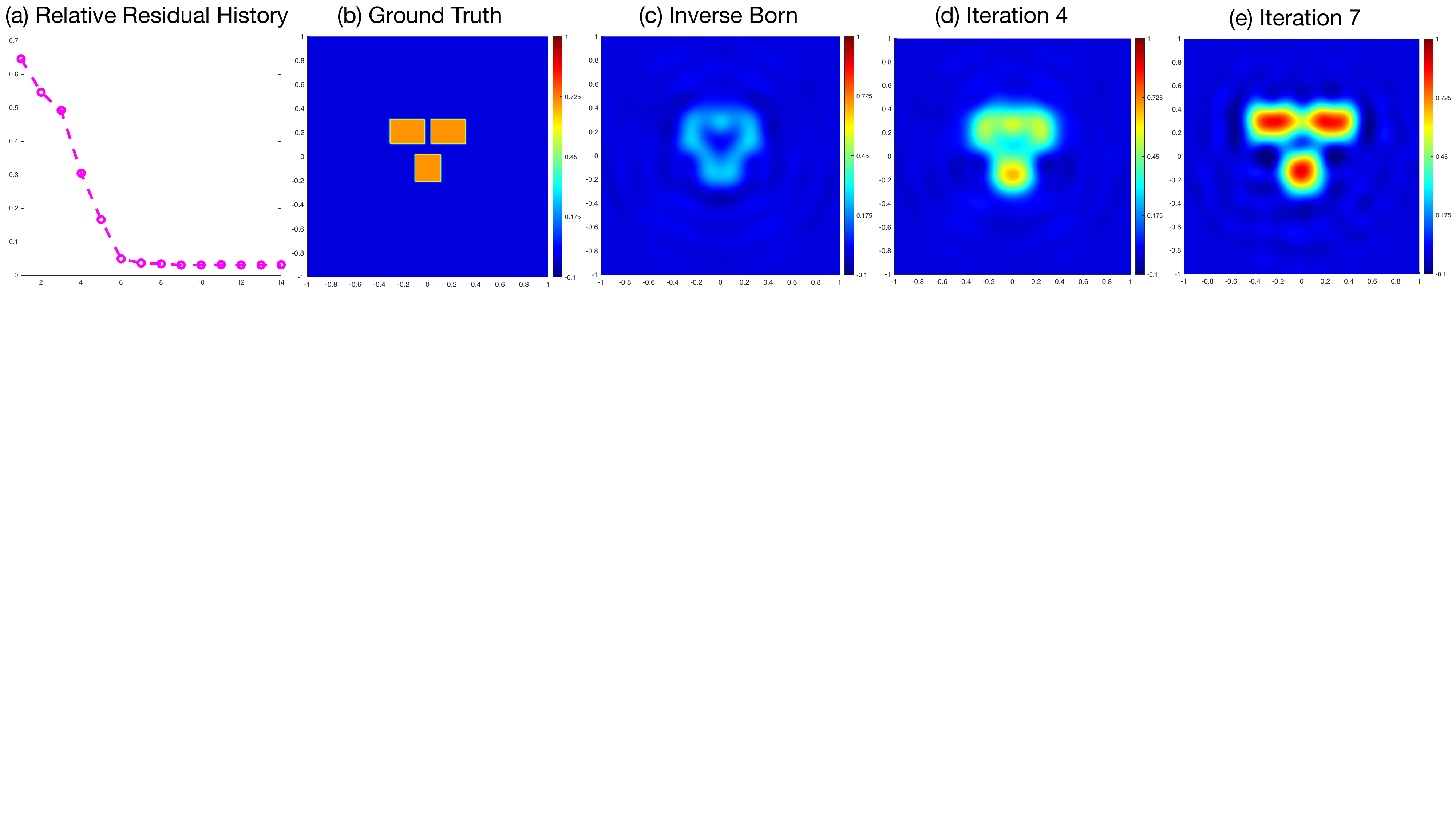}
\caption{Reconstruction of three rectangles at wave number $k=15$.   Figure(a) plots the relative residual history. To the right, Figure(b) plots the real-valued  ground truth contrast, and Figure(c)(d)(e) plots the reconstructions at different iteration steps.
} 
\label{figure ThreeSquares k15}
\end{figure} 
In Figure \ref{figure ThreeSquares}--\ref{figure ThreeSquares k15}, we further test the reconstruction of three nearby rectangles where the smallest distance between the rectangles is $0.05$. In these examples the contrast is real-valued, so that we can verify numerically that the proposed   method also works in the real-valued case. At wave number $k=10$, the low-rank regularized inverse Born approximation in Figure \ref{figure ThreeSquares}(c) fails to distinguish the top two rectangles  and the amplitude is  far off from the ground truth. The amplitude is again largely corrected  in the next couple of iterations  and more details are added during later iterations. In iteration $5$, the three rectangles become separate.  The relative residual history is plotted in Figure \ref{figure ThreeSquares}(a) which verifies the stopping criteria discussed in Section \ref{section numeric regu parameter and stopping criteria}.

We further increase the wave number to $k=15$ in Figure \ref{figure ThreeSquares k15} to test whether the resolution can be improved or not. The larger the wave number, the more the degree of nonlinearity; as a result, the inverse Born reconstruction in Figure \ref{figure ThreeSquares k15}(c) using the linearized low-rank structure becomes worse. The amplitude of the unknown contrast is significantly corrected after a few iterations, and the three rectangles become separated gradually. The relative residual history is plotted in Figure \ref{figure ThreeSquares k15}(a). We also observe that the stopping criteria discussed in Section \ref{section numeric regu parameter and stopping criteria} is also applicable to this numerical test. Furthermore, we point out that the dimension of the low-rank space increases as the wave number becomes larger, and the ensemble size is only on the order of the dimension of the proposed low-rank space.
Finally the resolution in Figure \ref{figure ThreeSquares k15}(e) with $k=15$ is better than the resolution in Figure \ref{figure ThreeSquares}(e) with $k=10$, and similar improved resolution (i.e. increasing stability) was also observed  in the linearized region \cite{zhou2024exploring} and \cite{isaev2022numerical} for weak scatterers.

\section*{Conclusion}
In this work we propose to use the low-rank structure to solve the inverse medium scattering problem beyond the Born or linearized region. The proposed low-rank space is intrinsic to the linearized forward map and its dimension is intrinsically determined by the wave number, in contrast to some other heuristic ways of choosing low-rank spaces. In the proposed low-rank space, our first contribution is to establish the Lipschitz stability in the fully nonlinear case and characterize the explicit Lipschitz constant in the linearized region. Our second contribution is to propose a low-rank-assisted ensemble Kalman filter to reconstruct the contrast numerically, where the solutions are updated iteratively in the proposed low-rank space and a new covariance operator  is proposed according to the connection between the low-rank structure and a Sturm-Liouville differential operator. Numerical examples are further provided to illustrate the feasibility of the low-rank-assisted method.
Looking ahead, it is interesting to integrate the low-rank structure with data-driven approaches that exploit a priori information of the unknown provided it is available.

\bibliographystyle{siamplain}

\end{document}